\definecolor {processblue}{cmyk}{0.96,0,0,0}
\theoremstyle{plain}
\newtheorem{proposition}{Proposition}[section]
\newtheorem{theorem}[proposition]{Theorem}
\newtheorem{corollary}[proposition]{Corollary}
\newtheorem{conjecture}[proposition]{Conjecture}
\theoremstyle{definition}
\newtheorem{definition}[proposition]{Definition}
\theoremstyle{remark}
\newtheorem{remark}[proposition]{Remark}
\newcommand\restr[2]{{
  \left.\kern-\nulldelimiterspace
  #1
  \vphantom{\big|}
  \right|_{#2}
  }}
\DeclarePairedDelimiterX{\innerp}[2]{\langle}{\rangle}{#1,#2}
\newcommand{\NN}{\mathbb{N}}
\newcommand{\ZZ}{\mathbb{Z}}
\newcommand{\TT}{\mathbb{T}}
\newcommand{\RR}{\mathbb{R}}
\newcommand{\DD}{\mathbb{D}}
\renewcommand{\SS}{\mathbb{S}}
\newcommand{\fX}{\mathfrak{X}}
\renewcommand{\div}{\textup{div}}
\newcommand{\curl}{\textup{curl}}
\newcommand{\Xdiv}{\fX^{\textup{div}}}
\newcommand{\id}{\textup{id}}
\tikzstyle{mytheorembox} = [draw=vdgreen, fill=blue!20, very thick, rectangle, rounded corners, inner sep=10pt, inner ysep=15pt]
\tikzstyle{mytheoremfancytitle} =[fill=vdgreen, text=white]
\definecolor{vdblue}{rgb}{0,0,.3}
\definecolor{dblue}{rgb}{0,0,.7}
\definecolor{lblue}{rgb}{.3,.3,1}
\definecolor{vlblue}{rgb}{.7,.7,1}
\definecolor{vvlblue}{rgb}{.9,.9,1}
\definecolor{vdred}{rgb}{.3,0,0}
\definecolor{dred}{rgb}{.7,0,0}
\definecolor{lred}{rgb}{1,.3,.3}
\definecolor{vlred}{rgb}{1,.7,.7}
\definecolor{vdgreen}{rgb}{0,.2,0}
\definecolor{dgreen}{rgb}{0,.4,0}
\definecolor{lgreen}{rgb}{.3,1,.3}
\definecolor{vlgreen}{rgb}{.7,1,.7}
\definecolor{lyellow}{rgb}{1,1,.3}
\definecolor{gray1}{rgb}{0.22,0.22,0.22}
\definecolor{gray2}{rgb}{0.28,0.28,0.28}
\definecolor{gray3}{rgb}{0.36,0.36,0.36}
\definecolor{gray4}{rgb}{0.44,0.44,0.44}
\definecolor{gray5}{rgb}{0.52,0.52,0.52}
\definecolor{gray6}{rgb}{0.6,0.6,0.6}
\definecolor{gray7}{rgb}{0.68,0.68,0.68}
\definecolor{gray8}{rgb}{0.76,0.76,0.76}
\definecolor{color1}{rgb}{1,0,0}
\definecolor{color2}{rgb}{0.98,0,0.816}
\definecolor{color3}{rgb}{0.717,0,1}
\definecolor{color4}{rgb}{0,0,1}
\definecolor{color5}{rgb}{0,1,1}
\definecolor{color6}{rgb}{0,1,0}
\definecolor{color8}{rgb}{1,1,0}
\definecolor{color7}{rgb}{1,0.651,0}
\begin{document}
\title{Universality in computable dynamical systems: Old and new}
\author{Ángel González-Prieto}
\address{ \'Angel Gonz\'alez-Prieto, Departamento de \'Algebra, Geometr\'ia y Topolog\'ia, Facultad de CC.\ Matem\'aticas, Universidad Complutense de Madrid, 28040 Madrid, Spain and Instituto de Ciencias Matem\'aticas (CSIC-UAM-UCM-UC3M),
28049 Madrid, Spain.}
 \thanks{Eva Miranda is the corresponding author for this article. Send any correspondence to: eva.miranda@upc.edu.}
 \thanks{\'Angel Gonz\'alez-Prieto is supported by the Ministerio de Ciencia e Innovaci\'on Grant PID2021-124440NB-I00 (Spain)}
\email{angelgonzalezprieto@ucm.es}
\author{Eva Miranda }\address{Eva Miranda,
Laboratory of Geometry and Dynamical Systems-SYMCREA, Department of Mathematics, EPSEB, Universitat Polit\`{e}cnica de Catalunya-IMTech
in Barcelona and
\\ CRM Centre de Recerca Matem\`{a}tica, Edifici C, Campus de Bellaterra, 08193 Bellaterra, Barcelona.
 }\thanks{Eva Miranda is funded by the Catalan Institution for Research and Advanced Studies via an ICREA Academia Prize 2021 and by the Alexander Von Humboldt Foundation via a Friedrich Wilhelm Bessel Research Award and by the AGAUR project 2021 SGR 00603. She is also supported by the Spanish State
Research Agency, through the Severo Ochoa and Mar\'{\i}a de Maeztu Program for Centers and Units
of Excellence in R\&D (project CEX2020-001084-M),  and under grant reference PID2023-146936NB-I00 funded by MICIU/AEI/
10.13039/501100011033 and by ERDF/EU} 
 \email{eva.miranda@upc.edu}
 \author{Daniel Peralta-Salas} \address{Daniel Peralta-Salas, Instituto de Ciencias Matem\'aticas, Consejo Superior de Investigaciones Cient\'ificas,
28049 Madrid, Spain.}
\thanks{Daniel Peralta-Salas is supported by the grants CEX2023-001347-S, RED2022-134301-T and PID2022-136795NB-I00 funded by MCIN/AEI/ 10.13039/501100011033.}
\thanks{All authors are funded by the project “Computational, dynamical and geometrical complexity in fluid dynamics”, Ayudas Fundación BBVA a Proyectos de Investigación Científica 2021, and by Bilateral AEI-DFG project: Celestial Mechanics, Hydrodynamics, and Turing Machines  with reference codes PCI2024-155042-2 and PCI2024-155062-2}
\email{dperalta@icmat.es}

\begin{abstract} 
The relationship between computational models and dynamics has captivated mathematicians and computer scientists since the earliest conceptualizations of computation. Recently, this connection has gained renewed attention, fueled by T. Tao's programme aiming to discover blowing-up solutions of the Navier-Stokes equations using an embedded computational model. In this survey paper, we review some of the recent works that introduce novel and exciting perspectives on the representation of computability through dynamical systems. Starting from dynamical universality in a classical sense, we shall explore the modern notions of Turing universality in fluid dynamics and Topological Kleene Field Theories as a systematic way of representing computable functions by means of dynamical bordisms. Finally, we will discuss some important open problems in the area.
\end{abstract}

\maketitle
\bigskip

\section{Introduction}

The relationship between dynamical systems and computation has generated a rich interplay between mathematics and theoretical computer science since its very inception. In recent years, ideas of universality have emerged that allow one to embed arbitrary computations into the flow of a dynamical system. This correspondence not only deepens our understanding of physical processes, like fluid flows or Hamiltonian dynamics, but also challenges our notions of what it means to compute.

In this survey paper, we review some of the recent developments of the area. Starting from universality in a classical sense, we show how very general dynamics can be embedded in restricted systems by increasing the dimension of the ambient space, such as in Hamiltonian systems or potential wells. 

Even more interestingly, in this work we will pay special attention to a particularly remarkable non-linear dynamical system, namely the motion of an ideal fluid on a Riemannian manifold $(M,g)$, as described by the so-called Euler equations 
\begin{equation}\label{eq:intro}
    \dot{X} + \nabla_X X = -\nabla p, \qquad \div(X) = 0,
\end{equation}
where $X$ is the unknown velocity field of the fluid and $p$ is the pressure function exerted by the fluid. Solutions to the Euler equations (\ref{eq:intro}) are known to be extremely complex and include some of the most intriguing phenomena such as chaos or highly knotted orbits.

One of the most important open problems in dynamical systems and partial differential equations is determining whether the Euler equations (and their viscous version, the Navier-Stokes equations) admit solutions that blow-up in finite time. To address this problem, in 2017, T.\ Tao envisaged a strategy based on universal dynamics. In his seminal paper \cite{tao2017universality}, he showed that any conservative quadratic dynamics in the Euclidean space can be embedded as the Euler flow inside an invariant finite-dimensional subspace of the space of volume-preserving vector fields on $M$. This was the first evidence that Euler flows must be very flexible, and very involved dynamics should be embeddable as solutions. As a consequence of this result, Tao conjectured that it should be possible to embed the dynamics of a computer inside the Euler equations, in such a way that any program could be reproduced by means of the flow. In this way, he speculated that a blowing-up solution could be obtained by using this computational universality, a dream that has not yet been achieved.

In the last years, Tao's programme has fostered research on the interplay between computability and dynamics. In the following sections, we review the contribution by the last two authors of this work jointly with R.\ Cardona and F.\ Presas, which shows that the computational procedure of any Turing machine can be represented as the Poincar\'e first return map of a stationary Euler flow on a $3$-dimensional manifold \cite{CMPP}. For this purpose, we shall review how a Turing machine can be seen as a discrete dynamical system, and how this discrete system can be represented as an area-preserving diffeomorphism of a disc, following the ideas of Moore \cite{moore1991generalized}. Using these insights on the dynamics of computation, the result follows by observing that this diffeomorphism generates a flow in its mapping torus of a very special type, known as a contact flow. In this way, using the correspondence unveiled by D. Sullivan and developed by J.\ Etnyre and R.\ Ghrist \cite{etnyre1997contact} that allows us to translate contact structures into particular (steady) Euler flows, this flow can be promoted to a genuine solution of the Euler equations.

In this direction, a new research line has recently been opened based on the dynamical representation of alternative models of computation. According to the celebrated Church--Turing thesis, every model of computability is equivalent to a Turing machine, but its dynamics and operation procedures might be very different. In this direction, we will outline some recent developments based on the dynamics of computable functions or, more precisely, partial recursive functions. In this sense, in the work \cite{GPMPS}, the authors show that any computable function can be represented by means of a flow on a bordism between two discs. Such bordisms may exhibit a very involved topology, such as loops and bifurcations, capturing the idea that the flow must revolve around the manifold in complicated ways to perform the whole calculation in one go. We refer to these constructions as Topological Kleene Field Theory (TKFT) after S.\ Kleene, who in \cite{kleene1936general} introduced the partial recursive functions, and in analogy with Topological Quantum Field Theories.

In this sense, we shall review in this paper the fundamental ingredients applied to the construction of the TKFT, highlighting how the dynamics execute very simple steps, whereas the topology of the bordism determines the globally difficult behaviour. Additionally, we shall outline some important applications of this TKFT to the complexity theory of computable functions. Indeed, these dynamical systems computing a partial recursive function enable a new way of looking at complexity from a purely topological point of view, by means of algebraic invariants attached to the representing bordism. The relation between this topological complexity and classical complexity will be the subject of future research.


The paper will finish with some natural open problems. Among them, it is particularly interesting the relation of these dynamical systems with quantum supremacy, namely the idea that quantum devices can outperform classical computers on specific tasks. Specifically, the results discussed in this paper prompt the natural question: Can continuous dynamical systems be engineered to surpass the efficiency of classical Turing machines? Could a new class of continuous models, perhaps inspired by fluid dynamics or topological field theories, provide a route to more efficient computation? With these techniques, would it be possible to beat quantum supremacy with dynamics-based computational devices?






\section{Universality in Dynamical Systems}\label{S:uds}

A key concept in understanding the complexity of a family of dynamical systems is the notion of \emph{universality}. In a broad sense, a family $\mathcal{U}$ of dynamical systems will be said to be universal for a class $\mathcal{C}$ of dynamical systems if every element of $\mathcal{C}$ can ``be embedded into'' a dynamical system of $\mathcal{U}$. In the case of smooth manifolds and vector fields, this can be formulated very explicitly following Tao~\cite{Tauuniversality}.

{Throughout this work, by a smooth manifold we mean a Hausdorff, second countable topological space that is locally Euclidean, and such that the change of coordinates are smooth. If $\varphi: M \hookrightarrow N$ is a differentiable map, its differential at a point $p \in M$ is the linear map between tangent spaces ${\varphi_*}_p: T_pM \to T_{\varphi(p)}N$. In particular, if $X$ is a vector field on $M$, then $\varphi_*(X)$ is the vector field on the image of $\varphi$ defined by $\varphi_*(X)_{\varphi(p)} = {\varphi_{*}}_p(X_p)$.
}

\begin{definition}[\cite{Tauuniversality}]\label{def:universality}
Let $\mathcal{C} = \{(M, X)\}$ be a collection of smooth manifolds $M$ with a vector field $X$ on it. We will say that another class $\mathcal{U} = \{(N, Y)\}$ is \emph{universal} for $\mathcal{C}$ if for every $(M, X) \in \mathcal{C}$, there exists $(N, Y) \in \mathcal{U}$ and an embedding $\varphi: M \hookrightarrow N$ such that $\varphi_*(X) = Y|_{\varphi(M)}$.
\end{definition}

\begin{remark}
The condition $\varphi_*(X) = Y|_{\varphi(M)}$ implies, in particular, that the flowlines of $X$ on $M$ are mapped into flowlines of $Y$ in $N$. In this sense, the dynamical system $(N, Y)$ has $\varphi(M)$ as an invariant submanifold and the flow there coincides with that of $X$. 
\end{remark}

Universality implies that, in a sense, the class $\mathcal{U}$ is as complex as $\mathcal{C}$, if not more. In other words, if we want to classify the dynamical systems in $\mathcal{C}$, we in particular need to classify the dynamics in some classes of invariant submanifolds of the universal class $\mathcal{U}$.

The most basic example of universality is based on classical mechanics. Recall that a symplectic manifold is a manifold $N$ endowed with a non-degenerate closed $2$-form $\omega \in \Omega^2(N)$. In this context, a smooth function $H: N \to \RR$ defines a unique vector field $X_H$ on $N$ determined by the condition $\iota_{X_H}\omega = dH$, called the \emph{Hamiltonian vector field} associated to $H$. Here $\iota_{X_H} \omega$ denotes the contraction $\iota_{X_H} \omega(Y_1, \ldots, Y_{n-1}) = \omega(X_H, Y_1, \ldots, Y_{n-1})$, and {$d: \Omega^{\bullet}(N) \to \Omega^{\bullet+1}(N)$ is the exterior differential}. These Hamiltonian systems model the dynamics of classical mechanical systems according to Hamilton's equations.

\begin{proposition}\label{prop:hamiltonian-universal}
	The class of Hamiltonian systems is universal for all smooth vector fields.
\end{proposition}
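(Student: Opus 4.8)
The plan is to realize $X$ as the Hamiltonian flow of a fiber-linear Hamiltonian on a cotangent bundle, restricted to its zero section. Given $(M,X)$, I would set $N = T^*M$ with its canonical symplectic form $\omega$, write $\pi\colon T^*M \to M$ for the bundle projection, and define $H\colon T^*M \to \RR$ by $H(\alpha) = \alpha\big(X_{\pi(\alpha)}\big)$ for a covector $\alpha \in T^*_qM$. This $H$ is smooth because $X$ is, and it is linear on each fiber (it is nothing but the principal symbol of $X$, viewed as a first-order differential operator on functions). The embedding $\varphi\colon M \hookrightarrow T^*M$ will be the zero section $q \mapsto (q,0)$, a closed embedded submanifold diffeomorphic to $M$.

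The heart of the argument is a local computation. Choosing coordinates $(q^i)$ on $M$, writing $X = \sum_i X^i(q)\,\partial_{q^i}$, and passing to the induced Darboux coordinates $(q^i,p_i)$ on $T^*M$ (so that $\omega = \sum_i dq^i\wedge dp_i$ and $H = \sum_i p_i X^i(q)$), one solves $\iota_{X_H}\omega = dH$ to obtain
\begin{equation*}
X_H \;=\; \sum_i X^i(q)\,\frac{\partial}{\partial q^i} \;-\; \sum_i\Big(\sum_j p_j\,\frac{\partial X^j}{\partial q^i}(q)\Big)\frac{\partial}{\partial p_i}.
\end{equation*}
Along the zero section $\{p=0\}$ the vertical (second) sum vanishes identically, so $X_H$ is tangent to $\varphi(M)$ there, and under the canonical splitting $T_{(q,0)}(T^*M)\cong T_qM\oplus T^*_qM$ it restricts to $\sum_i X^i(q)\,\partial_{q^i} = \varphi_*(X)$. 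Hence $\varphi(M)$ is an invariant submanifold of $(T^*M, X_H)$ with $\varphi_*(X) = X_H|_{\varphi(M)}$, which is precisely the condition of Definition~\ref{def:universality}; since $\omega$ and $H$ were defined intrinsically, the local formula for $X_H$ glues to a global vector field and no compatibility check between charts is needed.

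There is no genuine obstacle here, but the one point requiring care is the choice of the invariant copy of $M$. The naive attempt to embed $M$ as the graph of an arbitrary $1$-form fails, because such graphs are generically not preserved by the flow of $X_H$; it is exactly the vanishing of the vertical component of $X_H$ on the zero section --- equivalently, the fiber-linearity of $H$, which makes its flow the cotangent lift of the flow of $X$ --- that singles out the right submanifold. (One also has $\pi_*(X_H)=X$ on all of $T^*M$, which is the differential-geometric form of the method of characteristics, but for universality as phrased one needs the sharper statement on the zero section.)
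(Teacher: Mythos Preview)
Your proof is correct and follows essentially the same approach as the paper: both take $N=T^*M$ with the fiber-linear Hamiltonian $H(q,p)=p(X_q)$, embed $M$ as the zero section, and verify in canonical coordinates that the Hamiltonian vector field restricts to $\varphi_*(X)$ there. The only cosmetic difference is that you compute $X_H$ first and then restrict, whereas the paper compares $\iota_{\varphi_*(X)}\omega$ with $dH|_{\varphi(M)}$ directly; the underlying computation is the same.
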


\begin{proof}
    Let $M$ be a smooth manifold with a smooth vector field $X$ on it. Consider $N = T^*M$ the cotangent bundle of $M$ and define the function $H: T^*M \to \mathbb{R}$ given by $H(q, p) = p(X_q)$, where $q \in M$ and $p \in T^*_qM$.  We embed $M$ as the zero section $\varphi: M \hookrightarrow T^*M$, given by $\varphi(q) = (q, 0)$. 

    Recall that $T^*M$ is a symplectic manifold with the $2$-form $\omega$ given in local coordinates $(q^i, p^i)$ by $\omega = \sum_i dq^i \wedge dp^i$. Therefore, if $X = \sum_k x^k\partial_{q_k}$ on $M$ we have $\varphi_*(X) = \sum_k x^k \partial_{q^k}$ on $T^*M$ and thus
    $$
        \iota_{\varphi_*(X)} \omega = \sum_i  dq^i(\varphi_*(X))dp^i - dp^i(\varphi_*(X))dq^i = \sum_i x_i dp^i. 
    $$
    Analogously, since in coordinates we have $H = \sum_i p^i x^i$, we get
    $$
        dH = \sum_i \sum_k \left[\frac{\partial}{\partial p^k}\left(p^i x^i\right)dp^k + \frac{\partial}{\partial q^k}\left(p^i x^i\right)dq^k\right] = \sum_i x^i dp^i + \sum_{i,k} p^i\frac{\partial x_i}{\partial q^k}dq^k.
    $$
    Thus, $dH|_{\varphi(M)} = \iota_{\varphi_*(X)} \omega$, showing that $\varphi_*(X)$ coincides with the Hamiltonian vector field associated to $H$ on $\varphi(M)$, as we wanted to prove.
\end{proof}

\begin{remark}
    As a direct corollary of the proof of Proposition \ref{prop:hamiltonian-universal} we observe that we can even restrict our universal class to Hamiltonian systems on phase spaces $T^*M$.
\end{remark}

Observe that this universality implies, in particular, that the complexity of Hamiltonian systems on submanifolds of half the dimension of the ambient symplectic manifold is at least as rich as that of the whole class of smooth dynamical systems.

\subsection{Potential dynamics}

Despite the importance of Proposition \ref{prop:hamiltonian-universal}, the Hamiltonian system considered in the proof may not be physically realizable in terms of potential theory.

For this reason, we focus on Hamiltonian functions arising from a potential, as these are more physically relevant. Hence, we shall consider a compact Riemannian manifold $(M, g)$ with a smooth function $V: M \to \RR$ on it, called the \emph{potential}. This defines the Hamiltonian function on $T^*M$ given by
$$
    H_V(q,p) = \frac{1}{2}|p|^2 + V(q).
$$
Here, the kinetic energy term $\frac{1}{2}|p|^2 = \frac{1}{2}g(p, p)$ is computed using the induced Riemannian metric on the cotangent bundle. The Hamiltonian $H_V$ represents the energy of a particle under the effect of the potential $V$, and thus the flow of the corresponding Hamiltonian vector field describes the trajectory of the particle. Indeed, if $q: I \subseteq \RR \to M$ is such a flow, Hamilton equations reduce in this case to Newton's equation
\begin{equation}\label{eq:potential-flow}
    \ddot{q} = -\nabla V(q).
\end{equation}

Therefore, we can consider the more restricted subclass $\mathcal{C}_{\textrm{p}} = \{(T^*M, X_{H_V})\}$ of potential-type Hamiltonian systems for some Riemannian manifold $(M, g)$ and potential $V$.

\begin{theorem}[{\cite[Theorem 1.6]{Tauuniversality}}]\label{thm:universality-pot}
	The class \(\mathcal{C}_{\textup{p}}\) is universal for smooth dynamical systems \((M, X)\) on a compact manifold for which there exists a 1-form \(\lambda \in \Omega^1(M)\) such that \(\lambda(X)_p > 0\) for all $p \in M$, and the Lie derivative \(L_X\lambda\) is exact.
\end{theorem}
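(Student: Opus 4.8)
The plan is to embed $M$ not as the zero section of a cotangent bundle, as in Proposition~\ref{prop:hamiltonian-universal}, but as the graph of a carefully chosen $1$-form inside $T^*M$ itself, exploiting the freedom in the choice of Riemannian metric on $M$. The first step is to build a metric $g$ on $M$ for which $X$ is the metric dual of $\lambda$, that is $g(X,\,\cdot\,)=\lambda$. This is possible precisely because $\lambda(X)>0$ forces both $X$ and $\lambda$ to be nowhere vanishing, so $TM=\langle X\rangle\oplus\ker\lambda$ is a smooth splitting: one declares $X\perp\ker\lambda$, sets $g(X,X):=\lambda(X)$, and puts any fibre metric (glued by a partition of unity) on $\ker\lambda$. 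The result is positive definite exactly because $\lambda(X)>0$, and by construction $X^{\flat}:=g(X,\,\cdot\,)=\lambda$ and $|X|_g^2=\lambda(X)$.

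Since $L_X\lambda$ is exact, choose $f\in C^\infty(M)$ with $df=L_X\lambda$ and set $V:=\tfrac12\lambda(X)-f$, a smooth function on the compact manifold $M$. We take as universal object the potential-type system $(T^*M,X_{H_V})$ with $H_V(q,p)=\tfrac12|p|_g^2+V(q)$, and as embedding the section $\varphi\colon M\hookrightarrow T^*M$, $\varphi(q)=(q,\lambda_q)$, i.e.\ the graph of $\lambda$ (automatically an embedding, though in general not Lagrangian since $\lambda$ need not be closed). It then remains to check that $X_{H_V}$ is tangent to $\varphi(M)$, so $\varphi(M)$ is invariant, and that the restricted field equals $\varphi_*(X)$.

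For this I would write Hamilton's equations in canonical coordinates, $\dot q^i=g^{ij}p^j$ and $\dot p^i=-\tfrac12(\partial_{q^i}g^{jk})p^jp^k-\partial_{q^i}V$. Along $\varphi(M)$ the horizontal component is $g^{ij}\lambda_j\,\partial_{q^i}=X^i\partial_{q^i}$ — this uses only $X^{\flat}=\lambda$ — so the base dynamics already agrees with $X$, and the identity $\varphi_*(X)=X_{H_V}|_{\varphi(M)}$ will follow once tangency is shown. Tangency to $\{p^i=\lambda_i(q)\}$ means $\dot p^i=X^j\partial_{q^j}\lambda_i$ along the flow; substituting, rewriting $X^j\partial_{q^j}\lambda_i=(L_X\lambda)_i-\lambda_j\partial_{q^i}X^j$, and using $(\partial_{q^i}g^{jk})g_{jl}g_{km}=-\partial_{q^i}g_{lm}$, all the metric-derivative terms cancel and the condition collapses to
\[
   L_X(X^{\flat})=d\!\left(\tfrac12|X|_g^2-V\right),
\]
which is exactly $L_X\lambda=d(\tfrac12\lambda(X)-V)$, true by our choice of $V$ and $f$. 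Equivalently and more cleanly: by Cartan's formula $L_X(X^{\flat})=\iota_X d(X^{\flat})+d|X|_g^2$, and the graph of a $1$-form $\alpha$ is $X_{H_V}$-invariant with base dynamics $\alpha^{\sharp}$ iff $\iota_{\alpha^{\sharp}}d\alpha=-d(\tfrac12|\alpha|_g^2+V)$.

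The conceptual point is cheap: the two hypotheses are exactly what is needed to (i) find a metric turning $X$ into the momentum field of its own graph, and (ii) solve algebraically for $V$. The main obstacle is the bookkeeping in the coordinate computation — above all the signs, since the canonical symplectic form satisfies $\omega=-d\theta$ for $\theta=\sum_i p^i\,dq^i$, and a slip there would flip $V=\tfrac12\lambda(X)-f$ into an inconsistent expression — together with checking that the Christoffel-type terms really drop out; neither is deep. Finally, $M$ is compact and $V$ smooth, so $(T^*M,X_{H_V})\in\mathcal{C}_{\textup{p}}$ and $\varphi$ provides the required embedding.
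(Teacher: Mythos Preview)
Your argument is correct and follows exactly the strategy the paper indicates: the paper does not give its own proof but refers to Tao, noting only that it ``is based on a suitable choice of a Riemannian metric on $M$ that allows us to write the flow as a potential form,'' which is precisely what you do by forcing $X^\flat=\lambda$ and then solving for $V$. Your tangency check reduces cleanly to $L_X\lambda=d\bigl(\tfrac12\lambda(X)-V\bigr)$, equivalently $g(\nabla_X X,\cdot)=-dV$ via the identity $L_X(X^\flat)=g(\nabla_X X,\cdot)+\tfrac12\,d|X|_g^2$ (which the paper in fact derives later in its dual-Euler discussion), so the computation is sound.
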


\begin{remark}
    Since $\lambda(X) > 0$, it follows that the vector field $X$ is, in particular, nowhere vanishing.
\end{remark}

Observe that the requirement of the existence of such a $1$-form $\lambda \in \Omega^1(M)$ is natural, since potential-type Hamiltonian systems satisfy this property. Indeed, let us consider the Liouville form $\lambda = \sum_i p_i dq_i$ on $T^*M$, for which the symplectic form is $\omega = -d\lambda$. 
If $X$ is the Hamiltonian vector field associated to a potential-type Hamiltonian $H_V$, we have
$$
    L_X\lambda = \iota_Xd\lambda + d(\lambda(X)) = -\iota_X\omega + d(\lambda(X)) = d(\lambda(X) - H),
$$
where the last equality follows from the definition of Hamiltonian vector field, so $L_X\lambda$ is exact.

For the positivity condition, recall that in orthogonal local coordinates, we have
$$
    X = \sum_i p^i \partial_{q^i} - \sum_i\frac{\partial V}{\partial q^i} \partial_{p^i},
$$
and thus $\lambda(X) = |p|^2 \geq 0$.
This estimate can be improved if we restrict our attention to a submanifold $N$ contained in a regular level set of the Hamiltonian function (where $X$ is nowhere vanishing). Since the flow $(q(t), p(t))$ of $X$ satisfies $\dot{q}(t) = p(t)$, if $\lambda(X_{q(t), p(t)}) = 0$ for $t$ in an open set, then $q(t)$ is constant on that open set and thus $X$ itself identically vanishes there, contradicting the fact that $X$ does not vanish in $N$. This implies that the points $x \in N$ with $\lambda(X_x) = 0$ must be isolated in their orbit.
Therefore, we can arrange $\lambda(X) > 0$ by modifying the $1$-form by averaging around a small interval, as detailed in~\cite{Tauuniversality}.

The proof of the universality of this class of systems is presented in \cite[Section 2]{Tauuniversality}. It is based on a suitable choice of a Riemannian metric on $M$ that allows us to write the flow as a potential form.

\subsection{$\infty$-potential Dynamics}

As a variation of the flow (\ref{eq:potential-flow}), we can consider the ``$\infty$-potential'' equation
\begin{equation}\label{eq:infinity-potential}
    \partial_{tt}q - \Delta_{\mathbb{T}^d} q = - \nabla V(q),
\end{equation}
for a smooth function \(q: \RR \times \mathbb{T}^d \to M\), where $\mathbb{T}^d$ is the $d$-dimensional torus. 

It is worth noticing that solutions to the $\infty$-potential equation can be understood as Hamiltonian flows on an infinite dimensional manifold. To be precise, let us consider the space
\[
\mathcal{M}_M = C^\infty(\mathbb{T}^d, T^*M)
\]
{of smooth functions from $\mathbb{T}^d$ to the cotangent bundle $T^*M$}. On this space, we define the functional $\mathcal{H}_V: \mathcal{M}_M \to \RR$ that, for $f: \mathbb{T}^d \to T^*M$ given as $f(x) = (q(x), p(x))$, it computes
\[
\mathcal{H}_V(f) = \int_{\mathbb{T}^d} \Big(\frac{1}{2}|p(x)|^2 + \frac{1}{2}\sum_{k=1}^d \left|\partial_{x_k}q(x)\right|^2 + V(q(x))\Big)dx.
\]
Then, the flow of the Hamiltonian vector field associated to $\mathcal{H}_V$ is given by a function $u: I \subseteq \RR \to C^\infty(\mathbb{T}^d, T^*M)$ or, equivalently, a pair $(q(t, x), p(t, x)) \in T^*M$ for each $t \in I$ and $x \in \mathbb{T}^d$ satisfying the equations in orthogonal coordinates
$$
    \dot{q} = p, \qquad \dot{p} = \Delta_{\mathbb{T}^d} q - \nabla V(q),
$$
which are equivalent to (\ref{eq:infinity-potential}).

This defines a new class $\mathcal{C}_{\infty\textup{-p}} = \{(\mathcal{M}_M, X_{\mathcal{H}_V})\}$ of $\infty$-potential dynamical systems. In this direction, the main result in \cite{Tauuniversality} is the following analogue to Theorem \ref{thm:universality-pot} in the infinite-dimensional setting.

\begin{theorem}[{\cite[Theorem 1.6]{Tauuniversality}}]\label{thm:universality-infinite-pot}
	The class \(\mathcal{C}_{\infty\textup{-p}}\) is universal for smooth dynamical systems \((M, X)\) on a compact manifold for which there exists a 1-form \(\lambda \in \Omega^1(M)\) such that \(\lambda(X)_p > 0\) for all $p \in M$ and the Lie derivative \(L_X\lambda\) is exact.
\end{theorem}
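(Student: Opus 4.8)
\section*{Proof proposal}

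The plan is to deduce this infinite-dimensional statement from the finite-dimensional Theorem \ref{thm:universality-pot} by realizing the class $\mathcal{C}_{\textup{p}}$ of potential-type Hamiltonian systems as an invariant subclass of $\mathcal{C}_{\infty\textup{-p}}$. The crucial observation is that a configuration $q(t,x)$ which is independent of the torus variable $x \in \mathbb{T}^d$ has $\Delta_{\mathbb{T}^d} q \equiv 0$, so the $\infty$-potential equation (\ref{eq:infinity-potential}) collapses to Newton's equation (\ref{eq:potential-flow}). Concretely, inside $\mathcal{M}_{M'} = C^\infty(\mathbb{T}^d, T^*M')$ the subset of constant maps is a finite-dimensional submanifold canonically identified with $T^*M'$; reading off the evolution equations $\dot q = p$, $\dot p = \Delta_{\mathbb{T}^d} q - \nabla V(q)$ at a constant map $(q_0,p_0)$ one finds that $X_{\mathcal{H}_V}$ is tangent to it there, so this submanifold is invariant under $X_{\mathcal{H}_V}$. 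Moreover, on the constant maps the restriction of $\mathcal{H}_V$ equals $\mathrm{vol}(\mathbb{T}^d)\,H_V$ and the weak symplectic form of $\mathcal{M}_{M'}$ restricts to $\mathrm{vol}(\mathbb{T}^d)$ times the canonical symplectic form of $T^*M'$; since the common factor cancels in $\iota_{X}\omega = dH$, the restriction of $X_{\mathcal{H}_V}$ to the constant maps is exactly the potential-type Hamiltonian vector field $X_{H_V}$ on $T^*M'$. (Alternatively the factor $\mathrm{vol}(\mathbb{T}^d)$ may be absorbed by rescaling the metric $g$ on $M'$.) In other words, the inclusion of constant maps defines an embedding $\varphi_2 : T^*M' \hookrightarrow \mathcal{M}_{M'}$ with $(\varphi_2)_*(X_{H_V}) = X_{\mathcal{H}_V}|_{\varphi_2(T^*M')}$.

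Granting this, the result follows by composing embeddings. Let $(M,X)$ be a smooth dynamical system on a compact manifold admitting $\lambda \in \Omega^1(M)$ with $\lambda(X)_p > 0$ for all $p$ and $L_X\lambda$ exact. By Theorem \ref{thm:universality-pot} there exist a compact Riemannian manifold $(M',g)$, a potential $V \in C^\infty(M')$, and an embedding $\varphi_1 : M \hookrightarrow T^*M'$ with $(\varphi_1)_*(X) = X_{H_V}|_{\varphi_1(M)}$; in particular $\varphi_1(M)$ is invariant under $X_{H_V}$. Taking $\varphi := \varphi_2 \circ \varphi_1 : M \hookrightarrow \mathcal{M}_{M'}$, we obtain an embedding with
$$
\varphi_*(X) = (\varphi_2)_*\big((\varphi_1)_*(X)\big) = (\varphi_2)_*\big(X_{H_V}|_{\varphi_1(M)}\big) = X_{\mathcal{H}_V}|_{\varphi(M)},
$$
which is precisely the universality condition of Definition \ref{def:universality}. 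Note that no completeness of flows is needed, as Definition \ref{def:universality} constrains only the vector fields; in any case $M$, $M'$ and the regular level sets of $H_V$ are compact, so the finite-dimensional flow involved is complete.

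The genuinely delicate point is not the reduction above but making precise the ambient object $(\mathcal{M}_M, X_{\mathcal{H}_V})$ so that Definition \ref{def:universality} applies verbatim: one has to equip $\mathcal{M}_M = C^\infty(\mathbb{T}^d, T^*M)$ with a Fréchet manifold structure, define the weak symplectic form induced by the canonical form of $T^*M$, verify that the functional $\mathcal{H}_V$ admits a Hamiltonian vector field $X_{\mathcal{H}_V}$ (this is where the term $\tfrac12 \sum_k |\partial_{x_k} q|^2$ and the ellipticity of $\Delta_{\mathbb{T}^d}$ enter), and invoke local well-posedness of the semilinear wave equation (\ref{eq:infinity-potential}) to give a meaning to its flow. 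These functional-analytic foundations, together with the specific choice of Riemannian metric on $M'$ underlying Theorem \ref{thm:universality-pot}, are worked out in \cite{Tauuniversality}; once they are in place, the argument is the elementary ``restrict to the $x$-independent configurations'' reduction presented here.
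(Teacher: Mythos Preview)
The survey paper does not actually prove this theorem: it states it as \cite[Theorem 1.6]{Tauuniversality}, and the only commentary it adds is that the embedding $\varphi: M \hookrightarrow \mathcal{M}_N$ must be understood in the Gateaux sense, so that universality has to be read in a broader sense than Definition~\ref{def:universality}. There is therefore no in-paper proof to compare against.

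That said, your reduction is correct and is exactly the natural argument. The identification of $T^*M'$ with the $x$-independent configurations in $\mathcal{M}_{M'}$, the observation that $\Delta_{\mathbb{T}^d}$ vanishes there so (\ref{eq:infinity-potential}) collapses to (\ref{eq:potential-flow}), and the cancellation of the common factor $\mathrm{vol}(\mathbb{T}^d)$ between the restricted symplectic form and the restricted Hamiltonian are all valid; invariance of the constant-map locus follows immediately from the evolution equations $\dot q = p$, $\dot p = \Delta_{\mathbb{T}^d} q - \nabla V(q)$ as you note. Composing with the finite-dimensional embedding supplied by Theorem~\ref{thm:universality-pot} then gives the desired embedding into $(\mathcal{M}_{M'}, X_{\mathcal{H}_V})$. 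Your final paragraph correctly isolates where the genuine work lies---in the functional-analytic setup of $\mathcal{M}_M$ as a (Fréchet or similar) manifold with its weak symplectic structure---and this is precisely the ``subtler'' point the paper alludes to but defers to \cite{Tauuniversality}.
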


However, in the context of Theorem \ref{thm:universality-infinite-pot}, the universality is subtler since the representation of a dynamical system $(M, X)$ as an $\infty$-potential system corresponds to an embedding (in the Gateaux sense) into the infinite-dimensional manifold
$$
    \varphi: M \hookrightarrow \mathcal{M}_N
$$
for some Riemannian manifold $N$. Analogously, the finite-dimensional vector field $X$ becomes under the embedding the infinite-dimensional Hamiltonian vector field $X_{\mathcal{H}_V}$.

The key takeaway is that universality must be understood in a broader sense than Definition \ref{def:universality}, allowing embeddings into potentially infinite-dimensional spaces.

\section{Universality in Euler Flows}

In this section, we explore how universality manifests in more complex systems, such as solutions of inviscid fluid equations on manifolds.

\begin{definition}
Let \((M, g)\) be a Riemannian manifold. An \emph{Euler flow} is a pair $(X, p)$, where $X$ is a time-dependent, volume-preserving vector field and $p: M \to \RR$ is a smooth function satisfying the equation
\begin{equation}\label{eq:Euler}
\dot{X} + \nabla_X X = -\nabla p.
\end{equation}
Equation (\ref{eq:Euler}) is known as the \emph{Euler equation}.
\end{definition}

{ Here, $\nabla_XY$ denotes the covariant derivative of a vector field $Y$ with respect to $X$ by means of the Levi-Civita connection associated to $g$. Additionally, $\nabla p$ is the gradient vector field of the function $p$, defined by satisfying $dp = g(\nabla p, -)$.}

\begin{remark}
    The property of the vector field $X$ being volume-preserving can be rephrased using the \emph{divergence} operator. For this purpose, let $\alpha = g(X, -)$ be the dual $1$-form. The divergence of $X$ is the function
    $$
        \div(X) = \star d\star\alpha,
    $$
    where $\star: \Omega^\bullet(M) \to \Omega^{\dim M - \bullet}(M)$ is the Hodge star operator. This operator satisfies the property that the Lie derivative of the Riemannian volume form $\mu$ satisfies $L_X \mu = \textup{div}(X)\mu$. In this way, $X$ is volume-preserving if and only if $\div(X) = 0$. For this reason, these vector fields are also known as \emph{divergence-free}. Using Cartan's formula, this is also equivalent to $d\iota_X\mu = 0$. Notice that Equation (\ref{eq:Euler}) does not imply that $X$ is volume-preserving, so an Euler flow must satisfy the two conditions simultaneously.
\end{remark}

\begin{remark}
    Since the function $p$ is determined by the vector field $X$ up to a constant, it is customary to abuse notation and call $X$ itself the Euler flow.
\end{remark}

\begin{remark}
The vector field $X$ of an Euler flow models the velocity field of an incompressible inviscid fluid on $M$ subject to a pressure $p$. In this sense, Euler equations are a simplification of the so-called \emph{Navier-Stokes equations}, given by
\begin{equation}\label{eq:Navier-Stokes}
    \dot{X} + \nabla_X X - \nu\Delta X = -\nabla p, \qquad \div(X) = 0,
\end{equation}
where $\Delta$ is the Hodge Laplacian and $\nu\geq 0$ represents the viscosity of the fluid. The Euler equations correspond to solutions of (\ref{eq:Navier-Stokes}) with viscosity $\nu = 0$. A long-standing open problem is whether smooth solutions to (\ref{eq:Navier-Stokes}) or (\ref{eq:Euler}) can develop finite-time singularities (blow-up) \cite{fefferman2006existence}. On the other hand, short-time existence and uniqueness of solutions are guaranteed by standard arguments of partial differential equations \cite{majda2003vorticity}.
\end{remark}

Euler equations can be understood as an infinite-dimensional (local) flow. Indeed, Equation (\ref{eq:Euler}) defines an infinite-dimensional dynamical system on the linear subspace $\Xdiv(M) \subseteq \fX(M)$ of divergence-free vector fields.

In this way, as in Section~\ref{S:uds}, it is interesting to study universality properties of this dynamical system to capture its complexity. The seminal result in this direction is the following theorem by~Tao.

\begin{theorem}[\cite{tao2017universality}]\label{thm:Tao-embedding}
    Let $B: \mathbb{R}^n \times \mathbb{R}^n \to \mathbb{R}^n$ be a bilinear map satisfying $\langle B(x,x),x \rangle = 0$ for all $x \in \mathbb{R}^n$.
Then, there exists a Riemannian manifold \(M\) and an embedding
\[
\varphi: \RR^n \to \Xdiv(M)
\]
such that if \(u(t)\) is any solution of the ordinary differential equation
\[
\dot{u} = B(u, u),
\]
then \(\varphi(u(t))\) is a solution to the Euler equations on \(M\).
\end{theorem}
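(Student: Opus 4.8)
The plan is to realize the quadratic ODE $\dot u = B(u,u)$ as the Euler flow on a warped-product manifold, using the algebraic cancellation $\langle B(x,x),x\rangle = 0$ to kill the pressure-type and energy-dissipation obstructions. First I would recall that the Euler equations in the vorticity-free or ``algebraic'' formulation can be written, after choosing a convenient Riemannian metric $g$ on $M = \mathbb{T}^k \times (\text{torus or sphere factors})$, so that a vector field of the special form $X = \sum_i u_i(t)\, Y_i$ — where $\{Y_i\}$ is a fixed finite family of divergence-free vector fields on $M$ spanning a finite-dimensional subspace $V \subset \Xdiv(M)$ — satisfies $\nabla_X X = (\text{quadratic expression in the } u_i) + \nabla(\text{something})$. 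The point is to arrange the structure constants of the algebra generated by the $Y_i$ under the bilinear operation $(Y_i, Y_j) \mapsto \mathbb{P}(\nabla_{Y_i} Y_j)$, where $\mathbb{P}$ is the Leray projector onto $V$, to match (up to symmetrization) the components $B^\ell_{ij}$ of $B$. The embedding $\varphi: \mathbb{R}^n \to \Xdiv(M)$ is then simply $\varphi(u) = \sum_i u_i Y_i$, linear and injective by construction.

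The key steps, in order: (1) Encode $B$ as a tensor $B^\ell_{ij}$ and symmetrize, noting that only the symmetric part matters for $\dot u = B(u,u)$; the hypothesis $\langle B(x,x),x\rangle = 0$ says the totally symmetric part of $\langle B^\ell_{ij}, \cdot\rangle$ — i.e. $B^{(ijk)}$ after lowering an index with $\delta$ — vanishes, which is exactly the condition needed for the quadratic term to be, formally, a ``Lie-algebra-like'' antisymmetric-in-energy object. (2) Build $M$ and $g$ as a product of a flat torus $\mathbb{T}^N$ (carrying ``plane-wave'' vector fields $\partial_{\theta_a}$, or combinations $\cos/\sin$-modulated fields) with curvature parameters tuned by a warping function, so that the nonlinearity $\mathbb{P}(\nabla_{Y_i}Y_j)$ realizes the prescribed structure constants; this is the heart of Tao's construction and uses that self-interaction of a single Fourier mode on a flat torus produces no net divergence-free component (the ``$\langle B(x,x),x\rangle = 0$'' phenomenon is built into the geometry of resonant triads). (3) Verify that with this choice the residual term $\nabla_X X - \sum_\ell \big(\sum_{ij} B^\ell_{ij} u_i u_j\big) Y_\ell$ is an exact gradient $\nabla p$, so that setting $\dot u = B(u,u)$ makes $(X,p)$ solve $\dot X + \nabla_X X = -\nabla p$ while $\mathrm{div}(X)=0$ holds automatically because each $Y_i$ is divergence-free and the $u_i$ depend only on $t$. (4) Check that $\varphi(\mathbb{R}^n)$ is a genuine invariant finite-dimensional subspace, so the infinite-dimensional Euler PDE restricts to the finite ODE there.

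The main obstacle is step (2): actually producing a manifold and metric whose Leray-projected self-nonlinearity has *arbitrary* prescribed structure constants subject only to the symmetric-trace-free condition. This requires a careful bookkeeping of resonant triads of Fourier modes and a dimension count ensuring enough ``room'' (hence $M$ of possibly large dimension and the freedom to choose $N \gg n$), together with the verification that cross-terms one does not want can be absorbed into the pressure. Tao handles this by a clever choice of ``stretched'' coordinates and a block structure in which the desired interactions are resonant and the unwanted ones are not; reproducing this is the technical core. Steps (1), (3), (4) are then essentially formal: (1) is linear algebra, (3) is the observation that on a closed manifold any divergence-free vector field that is $L^2$-orthogonal to all divergence-free fields is zero, so the ``error'' is a gradient, and (4) follows because the ODE $\dot u = B(u,u)$ has, by Picard–Lindelöf, unique local solutions which by construction lift to Euler solutions.
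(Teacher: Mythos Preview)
Your high-level plan --- take $\varphi$ to be the linear map $u \mapsto \sum_i u_i Y_i$ for a suitable basis $Y_1,\dots,Y_n$ of divergence-free fields on some $M$, and then match the structure constants of the Euler nonlinearity to the tensor $B^\ell_{ij}$ --- is correct and is indeed the shape of Tao's argument as the paper sketches it. But your concrete realization of step~(2) is not what Tao does, and the version you propose has a genuine gap.

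The paper (following \cite{tao2017universality}) takes $M = \mathrm{SO}(n) \times \mathbb{T}^n$, and the basis fields are $X_i = \xi_i + \sum_j a^j_i\,\partial_{t_j}$ where the $\xi_i$ are lifts of \emph{right-invariant} vector fields on $\mathrm{SO}(n)$. The point is the Euler--Arnold mechanism: for a right-invariant metric on a Lie group, the Euler PDE restricted to right-invariant fields collapses to a quadratic ODE on the Lie algebra whose coefficients are determined by the inertia operator, and that operator is something you get to \emph{choose}. The condition $\langle B(x,x),x\rangle = 0$ is exactly what is needed to realize $B$ this way (together with the extra $\mathbb{T}^n$ factor and the constants $a^j_i$ to absorb residual terms). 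So the freedom that makes step~(2) go through lives in the handcrafted metric on the non-abelian factor, not in any combinatorics of Fourier modes.

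By contrast, your proposal builds $M$ as a flat torus and tries to prescribe the structure constants via resonant triads of plane waves. On a flat torus the Leray-projected nonlinearity between Fourier modes is rigidly dictated by the wave-vector geometry; you can choose which modes to include, but you cannot freely tune the interaction coefficients among them. You yourself flag step~(2) as ``the main obstacle'' and then defer to ``Tao handles this by a clever choice of stretched coordinates and a block structure,'' but that description does not match the actual construction: there are no stretched coordinates or resonant-triad bookkeeping in Tao's proof of this theorem. As written, your step~(2) is an assertion without a mechanism, and the mechanism you gesture at (Fourier resonances) is not known to produce arbitrary $B$ subject only to the energy constraint. The missing idea is precisely the Lie-group/Euler--Arnold reduction on $\mathrm{SO}(n)$.
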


\begin{remark}
    In the language of Section~\ref{S:uds}, Theorem \ref{thm:Tao-embedding} can be alternatively stated as that Euler flows are universal for quadratic flows $u$ in $\RR^n$ satisfying the vanishing condition $\langle B(u,u),u\rangle = 0$.
\end{remark}

The proof of Theorem \ref{thm:Tao-embedding} in \cite{tao2017universality} is fully constructive and demonstrates that $\varphi: \RR^n \to \Xdiv(M)$ is a linear embedding. The target manifold is chosen to be $M = \textup{SO}(n) \times \TT^n$, and the author constructed (time-independent) vector fields $X_1, \ldots, X_n \in \Xdiv(\textup{SO}(n) \times \TT^n)$ of the form
$$
    X_i = \xi_i + \sum_{i=1}^n a_i\partial_{t_i},
$$
where $\xi_i$ is the lift to $\textup{SO}(n) \times \TT^n$ of a right $\textup{SO}(n)$-invariant vector field on $\textup{SO}(n)$, $a_i \in \RR$ are suitably chosen constants, and $\partial_{t_1}, \ldots, \partial_{t_n}$ is the standard frame of the tangent bundle of $\TT^n$. These vector fields turn out to be Euler flows for a handcrafted metric and pressure function, showing that the desired embedding $\varphi: \RR^n \to \Xdiv(\textup{SO}(n) \times \TT^n)$ is given by $\varphi(x_1, \ldots, x_n) = x_1X_1 + \ldots + x_nX_n$. For details, please check \cite{tao2017universality}.

\begin{remark}
    The hypothesis $\langle B(x,x), x \rangle = 0$ in Theorem \ref{thm:Tao-embedding} is very natural. A classical solution to the Euler equations satisfies the so-called conservation of energy. Explicitly, given an Euler flow $X$, we define the energy function
    $$
        E(t) = \frac{1}{2}\int_M ||X_t||^2_g\,\mu_g,
    $$
    where $\mu_g$ is the Riemannian volume form. Then, using the Euler equation we have
    \begin{align*}
        \frac{d}{dt} E(t) &= \int_M \langle \dot{X},X\rangle_g\,\mu_g = -\int_M \langle \nabla_X X,X\rangle_g\,\mu_g - \int_M \langle \nabla p,X\rangle_g\,\mu_g \\
        &= -\int_M \left\langle\nabla\left(\frac{1}{2}||X||_g^2+ p\right),X\right\rangle_g\,\mu_g.
    \end{align*}
    Now, let us observe that $\div(fY) = \langle \nabla f, Y\rangle_g + f\div(Y)$ for any function $f \in C^\infty(M)$ and vector field $Y \in \fX(M)$. In particular, if $X$ is divergence-free, then $\langle \nabla f, X\rangle_g = \div(fX)$, and the last integral vanishes by Stokes theorem. In this context, if $X = \varphi(x)$, with $\dot{u} = B(u,u)$ and the embedding is isometric, the conservation of energy for $X$ is implied by the vanishing condition $\langle B(u,u), u\rangle = \frac{1}{2}\frac{d}{dt} ||u||^2 = 0$.
\end{remark}

This result has astonishing consequences on the dynamics of Euler flows, as shown in the following result.

\begin{corollary}[Torres de Lizaur \cite{lizaur2022chaos}]\label{cor:chaos-Euler}
    Euler flows are universal for vector fields on { the sphere} $\SS^n$ that extend to polynomial vector fields on $\RR^n$. In particular, there exists a Riemannian manifold $M$ for which the Euler flow in $\Xdiv(M)$ is chaotic. 
\end{corollary}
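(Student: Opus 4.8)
The plan is to reduce the statement to Tao's embedding theorem (Theorem~\ref{thm:Tao-embedding}). The first and main step is to show that every polynomial vector field $v$ on $\SS^n\subseteq\RR^{n+1}$ can, after enlarging the dimension, be realized as a quadratic vector field $x\mapsto B(x,x)$ on some $\RR^N$ with $B$ bilinear and $\langle B(x,x),x\rangle=0$ for \emph{every} $x\in\RR^N$, in such a way that $\SS^n$ embeds in $\RR^N$ as an invariant submanifold on which $x\mapsto B(x,x)$ restricts to $v$. Once this is done, Theorem~\ref{thm:Tao-embedding} supplies a Riemannian manifold $M$ and a (linear) embedding $\psi\colon\RR^N\to\Xdiv(M)$ sending solutions of $\dot u=B(u,u)$ to Euler flows on $M$; composing with $\SS^n\hookrightarrow\RR^N$ displays $\SS^n$ as an invariant submanifold of an Euler flow on $M$ carrying the prescribed dynamics, which is the desired universality (in the sense of Definition~\ref{def:universality}, read in the infinite-dimensional target $\Xdiv(M)$).

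To carry out the reduction I would first use that $\|x\|^2\equiv1$ on $\SS^n$ to replace $v$, without altering it there, by a polynomial vector field that is homogeneous of a conveniently chosen degree $e$; being homogeneous and tangent to $\SS^n$, it is then automatically tangent to \emph{every} concentric sphere of $\RR^{n+1}$, because a homogeneous polynomial vanishing on the unit sphere vanishes identically. Next choose an odd integer $d$ with $e=d+1$ and consider the Veronese-type map $\iota\colon\RR^{n+1}\to\RR^{N}$, $N=\#\{\alpha\in\NN^{n+1}:|\alpha|=d\}$, given by $\iota(x)_\alpha=\sqrt{\binom{d}{\alpha}}\,x^\alpha$. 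The multinomial theorem gives the polynomial identity $\|\iota(x)\|^2=\|x\|^{2d}$, so $\iota$ carries each concentric sphere of $\RR^{n+1}$ into a concentric sphere of $\RR^N$; in particular $\iota(\SS^n)\subseteq\SS^{N-1}$, and $\iota$ is an embedding on $\SS^n$ since $d$ is odd. Differentiating along a trajectory of $v$ shows that the $\alpha$-component of the pushforward of $v$ equals $\sqrt{\binom{d}{\alpha}}\sum_i\alpha_i\,x^{\alpha-e_i}v_i(x)$, a homogeneous polynomial of degree $(d-1)+e=2d$; expanding it in the degree-$2d$ monomials $x^\beta x^\gamma$ with $|\beta|=|\gamma|=d$ rewrites the pushforward, on the image, as $z\mapsto B(z,z)$ for a suitable symmetric bilinear $B$. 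The one remaining point is that, among the many bilinear forms agreeing with these prescribed values on $\iota(\RR^{n+1})$, one can be chosen so that the cubic $\langle B(z,z),z\rangle$ vanishes on all of $\RR^N$ — equivalently that the totally symmetrized $3$-tensor underlying $B$ is zero. This is a finite linear-algebra adjustment, and it is precisely this global vanishing (already forced on the image, since the dynamics there preserves the Euclidean norm) that is the genuine obstacle: conservation of energy is exactly what Theorem~\ref{thm:Tao-embedding} demands, and it is what dictates homogenizing $v$ first and routing the construction through an embedding into a round sphere rather than an arbitrary quadric.

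The chaos assertion then follows by feeding the reduction a concrete chaotic polynomial input. Pick a hyperbolic automorphism $A\in\SL(2,\ZZ)$ of $\TT^2$ and form its suspension flow on the mapping torus $N_A$, a closed $3$-manifold; this flow is Anosov, hence chaotic (topologically transitive, with dense periodic orbits, sensitive dependence, and positive topological entropy). Embed $N_A$ smoothly into some sphere $\SS^n$, extend the suspension vector field to a vector field tangent to $\SS^n$, and approximate it in the $C^1$ topology by a vector field that is polynomial on $\RR^{n+1}$ and still tangent to $\SS^n$ (polynomial approximation of the components followed by projection onto the tangent directions). By structural stability of Anosov flows, this polynomial vector field still carries an invariant set on which the dynamics is topologically conjugate to the original suspension, hence chaotic. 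Applying the universality just established yields a Riemannian manifold $M$ whose Euler flow leaves invariant a submanifold on which it is chaotic, so the Euler flow on $M$ is chaotic. Apart from the global energy adjustment flagged above, the remaining ingredients — Theorem~\ref{thm:Tao-embedding}, persistence of hyperbolic dynamics under $C^1$-perturbation, and the embeddability of a closed manifold into a sphere — are standard.
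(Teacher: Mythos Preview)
Your overall strategy coincides with the paper's: reduce to Theorem~\ref{thm:Tao-embedding} by exhibiting the polynomial vector field on $\SS^n$ as the restriction of an energy-conserving quadratic ODE on some $\RR^N$. Where you differ is in the choice of embedding. The paper, following \cite{lizaur2022chaos}, expands the field in spherical harmonics on $\SS^n$; you instead route through the Veronese-type map $\iota(x)=\bigl(\sqrt{\tbinom{d}{\alpha}}\,x^\alpha\bigr)_{|\alpha|=d}$. Both land $\SS^n$ in a round sphere of $\RR^N$ and render the pushforward quadratic, and your chaos argument (Anosov suspension, polynomial $C^1$-approximation of a tangent extension, structural stability) is likewise the mechanism invoked in the cited reference.

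The step you flag --- upgrading $\langle B(z,z),z\rangle=0$ from the Veronese image to all of $\RR^N$ --- is the only substantive one, and it is fillable, but it is not merely the ``finite linear-algebra adjustment'' you describe: it rests on the classical fact that the homogeneous ideal of the Veronese variety is generated by quadrics. Given any initial choice $B_0$, the cubic $c(z)=\langle B_0(z,z),z\rangle$ vanishes on the cone $\iota(\RR^{n+1})$, hence $c=\sum_j L_jQ_j$ with $L_j(z)=\langle\ell_j,z\rangle$ linear and $Q_j$ among the defining quadrics; the correction $B'(z,z)=\sum_jQ_j(z)\,\ell_j$ then vanishes on the image and has $\langle B'(z,z),z\rangle=c(z)$, so $B=B_0-B'$ works. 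The spherical-harmonic presentation in \cite{lizaur2022chaos} is organized so that this energy identity is visible directly from the $L^2$ structure on the sphere, which is what that route buys; your monomial route is more elementary in flavour but needs this piece of algebraic geometry to close.
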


The proof of Corollary \ref{cor:chaos-Euler} roughly proceeds by embedding such a vector field $u$ on $\SS^n$ as the velocity field of a quadratic flow in $\RR^N$ that satisfies the aforementioned vanishing condition. This is done by expanding $u$ using spherical harmonics on $\SS^n$, see \cite{lizaur2022chaos} for details. In this setting, the condition that $u$ is tangent to $\SS^n$ corresponds precisely to the energy conservation law.


\subsection{Dual formulation of the Euler equation}

As we will exploit in the upcoming sections, there is a dual formulation of the Euler equation (\ref{eq:Euler}) in terms of dual forms that is in many cases more suitable for geometric arguments. In this sense, let $(X, p)$ be an Euler flow on a Riemannian manifold $(M, g)$ and let $\alpha = g(X, -)$ be the dual $1$-form to $X$.

Let us start by computing the Lie derivative $L_X \alpha$. To do so, let $Y$ be any vector field. We have
$$
    (L_X\alpha)(Y) = L_X(\alpha(Y)) - \alpha(L_XY) = X\langle X, Y\rangle - \langle X, [X,Y]\rangle.
$$
Now, using that $[X,Y] = \nabla_X Y - \nabla_Y X$, and $ X\langle X, Y\rangle = \langle\nabla_X X, Y\rangle + \langle X, \nabla_X Y\rangle$, we have that
\begin{align*}
    (L_X\alpha)(Y) &= \left[\langle\nabla_X X, Y\rangle + \langle X, \nabla_X Y\rangle\right] - \left[\langle X, \nabla_X Y\rangle - \langle X, \nabla_Y X\rangle\right] \\
    &= \langle\nabla_X X, Y\rangle + \langle X, \nabla_Y X\rangle = \langle\nabla_X X, Y\rangle + \frac{1}{2}d\left(||X||^2\right)(Y).
\end{align*}

On the other hand, using Cartan's formula, we get $L_X \alpha = \iota_Xd\alpha + d\iota_X\alpha = \iota_Xd\alpha + d\left(||X||^2\right)$. Therefore, we have
$$
    \langle\nabla_X X, -\rangle = L_X\alpha - \frac{1}{2} d\left(||X||^2\right) = \iota_Xd\alpha + \frac{1}{2}d\left(||X||^2\right).
$$
Therefore, dualizing (\ref{eq:Euler}), we get
$$
    0 = \langle\dot{X}, -\rangle + \langle\nabla_X X, -\rangle + \langle\nabla p, -\rangle = \dot{\alpha} + \iota_Xd\alpha + \frac{1}{2}d\left(||X||^2\right) + dp = \dot{\alpha} + \iota_Xd\alpha + d\left(p+\frac{1}{2}||X||^2\right).
$$
Hence, if we define the so-called Bernoulli function $B = p+\frac{1}{2}||X||^2$, the dual formulation of (\ref{eq:Euler}) is
\begin{equation}\label{eq:Euler-dual}
    \dot{\alpha} + \iota_Xd\alpha = - dB.
\end{equation}
This equation is known as the \emph{dual Euler equation}. The divergence-free condition for $X$ can be dually written as $d\star\alpha = 0$.

In the following, we will be also interested in stationary solutions to the Euler equation, i.e., time-independent solutions.

\begin{definition}
    A vector field $X$ is a \emph{stationary Euler flow}, or a \emph{steady Euler flow}, if it satisfies the equations
    \begin{equation}\label{eq:stationary-euler}
        \iota_X d\alpha = -dB, \qquad \div(X) = 0,
    \end{equation}
    where $\alpha = g(X, -)$ is the dual form and $B \in C^\infty(X)$ is the Bernoulli function.
\end{definition}

\subsection{The contact mirror}

One of the key advances of the dual formulation of the Euler equations is that it allows us to relate Euler flows to geometric structures on $M$. The most important instance of this link is the so-called \emph{contact mirror}, as unveiled by Sullivan and ellaborated by Etnyre and Ghrist \cite{etnyre1997contact}. 

Throughout this section, we shall suppose that $M$ has odd dimension $\dim M = 2n+1$. If $M$ is endowed with a Riemannian metric $g$, let us denote by $\mu$ the associated Riemannian volume. In this setting, given a vector field $X \in \fX(M)$ with dual form $\alpha = g(X, -)$, its curl is the unique vector field $\curl(X) \in \fX(M)$ satisfying $\iota_{\curl(X)} \mu = (d\alpha)^n$.

\begin{definition}
    Given a Riemannian manifold $(M,g)$, a \emph{Beltrami vector field} is a vector field $X \in \fX(M)$ satisfying
    $$
        \curl(X) = fX,
    $$
    for some $f\in C^\infty(M)$. If $f(p) > 0$ for all $p \in M$, we say that $X$ is a \emph{rotational Beltrami field}.
\end{definition}

\begin{remark}
    The Beltrami condition is equivalent to $(d\alpha)^n = f \iota_X \mu$. For the dual form $\alpha = g(X, -)$, this can be written as $(d\alpha)^n = f\star \alpha$.
\end{remark}

\begin{remark}
    In the case that $X$ is a Beltrami vector field with the property that $\curl(X) = \lambda X$ with $\lambda \in \RR$ and $\lambda \neq 0$, i.e., $X$ is an eigenfunction of $\curl$ with non-zero eigenvalue $\lambda$, then $X$ is automatically divergence-free, since
    $$
        0 = \div(\curl(X)) = \div(\lambda X) = \lambda\, \div(X).
    $$
    However, in general a Beltrami field for a non-constant function $f$ may not be divergence-free.
\end{remark}

Beltrami fields are significant because they correspond to geometric structures on $M$ known as contact forms.

\begin{definition}
Let $M$ be a manifold of dimension $2n+1$. A \emph{contact form} for $M$ is a $1$-form $\alpha \in \Omega^1(M)$ such that $\alpha \wedge (d\alpha)^n \neq 0$ everywhere. A vector field $X$ satisfying $\iota_X d\alpha = 0$ is called a \emph{Reeb-like} vector field.
\end{definition}


\begin{remark}
    A contact form defines a $2n$-plane distribution $\xi = \ker\alpha$, called the \emph{contact structure}. The contact condition is equivalent to the fact that $\xi$ is maximally non-integrable. Furthermore, this implies that $d\alpha$ defines a symplectic form on $\xi$. However, a contact structure only defines the contact form up to rescaling so, in particular, it does not determine the Reeb-like vector fields.
\end{remark}

\begin{remark}
    Since $d\alpha$ is non-degenerate on $\xi = \ker\alpha$, this implies that Reeb-like vector fields form a line bundle transverse to $\xi$. Among them, there is a particular one $R$ satisfying additionally that $\alpha(R) = 1$, called the \emph{Reeb vector field}. 
\end{remark}

A theorem proved by Etnyre and Ghrist in dimension $3$ in \cite{etnyre1997contact}, and later extended to higher dimensions by the last two authors of this work jointly with Cardona and Presas \cite{CMPP2}, shows that these seemingly unrelated notions are actually strongly linked.

\begin{theorem}[\cite{etnyre1997contact,CMPP2}]\label{thm:contact-mirror}
    Let $M$ be an odd-dimensional orientable manifold and $X$ a nowhere vanishing vector field on $M$. If there exists a Riemannian metric for which $X$ is a rotational Beltrami field, then it is a Reeb-like vector field of some contact form on $M$. Reciprocally, if there exists a contact form for which $X$ is a Reeb-like vector field, then $X$ is a rotational Beltrami vector field for a certain Riemannian metric. Such metric can be additionally chosen so that $X$ is divergence-free.
\end{theorem}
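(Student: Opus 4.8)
The plan is to prove the two implications separately, reducing each to a pointwise linear-algebra computation once the correct ansatz for the auxiliary geometric object is in place. For the forward direction, suppose $X$ is a nowhere vanishing rotational Beltrami field for a metric $g$, so that $\curl(X) = fX$ with $f > 0$ everywhere. Let $\alpha = g(X,-)$ be the dual $1$-form; I would first show that $\alpha$ is a contact form. By the Beltrami condition in its dual form, $(d\alpha)^n = f\iota_X\mu$, hence $\alpha\wedge(d\alpha)^n = f\,\alpha\wedge\iota_X\mu = f\,\alpha(X)\,\mu = f\,\|X\|_g^2\,\mu$, using $\iota_X(\alpha\wedge\iota_X\mu) = \alpha(X)\iota_X\mu - \alpha\wedge\iota_X\iota_X\mu = \alpha(X)\iota_X\mu$ together with $\iota_X\iota_X\mu = 0$. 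Since $f > 0$ and $X$ is nowhere zero, $\alpha\wedge(d\alpha)^n$ is a nowhere-vanishing top form, so $\alpha$ is contact. Next I would verify that $X$ is Reeb-like for $\alpha$, i.e.\ $\iota_X d\alpha = 0$: this follows because $\iota_X(d\alpha)^n = n(\iota_X d\alpha)\wedge(d\alpha)^{n-1}$ equals $f\iota_X\iota_X\mu = 0$, and non-degeneracy of $d\alpha$ on the $2n$-dimensional distribution $\ker\alpha$ forces $\iota_X d\alpha$ (a $1$-form annihilating $X$, hence living essentially on $\ker\alpha$) to vanish. Since moreover $\alpha(X) = \|X\|_g^2 > 0$, rescaling $\alpha$ by $1/\|X\|_g^2$ produces a contact form whose Reeb vector field is exactly $X$; but even without rescaling, $X$ is Reeb-like for $\alpha$, which is all that is claimed.

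For the converse, suppose $\alpha$ is a contact form with $\iota_X d\alpha = 0$ for the given nowhere-vanishing $X$. Up to rescaling $\alpha \mapsto \alpha/\alpha(X)$ — note $\alpha(X)$ is nowhere zero since Reeb-like fields are transverse to $\ker\alpha$ — I may assume $\alpha(X) = 1$, so $X = R$ is the Reeb field. The goal is to build a metric $g$ for which $\curl_g(X) = fX$ with $f > 0$, and ideally $\div_g(X) = 0$. The natural ansatz is to split $TM = \mathbb{R}X \oplus \ker\alpha$, declare this splitting orthogonal, set $\|X\|_g = 1$, and choose the metric on $\ker\alpha$ to be any metric compatible with the symplectic form $d\alpha|_{\ker\alpha}$ (such a compatible metric exists by the standard polar-decomposition argument, since $(\ker\alpha, d\alpha)$ is a symplectic vector bundle). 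With this choice $\alpha = g(X,-)$, and the associated volume form is $\mu = \alpha\wedge\omega^n/n!$ where $\omega = d\alpha|_{\ker\alpha}$ extended by $\iota_X\omega = 0$ (here using $\iota_X d\alpha = 0$ so that $d\alpha$ itself restricts correctly). Then $(d\alpha)^n = (d\alpha|_{\ker\alpha})^n$, which is a volume form on $\ker\alpha$, and one computes $(d\alpha)^n = c\,\iota_X\mu$ for a positive function $c$ (in fact constant if the compatible metric is normalized), giving $\curl_g(X) = cX$ with $c > 0$, i.e.\ $X$ is rotational Beltrami. Divergence-freeness, $\div_g(X)=0$, amounts to $d\iota_X\mu = 0$; since $\iota_X\mu$ is proportional to $\omega^n = (d\alpha)^n$ which is exact (it equals $d(\alpha\wedge(d\alpha)^{n-1})$ up to a constant, as $d\alpha$ is closed), this can be arranged by the normalization making the proportionality constant genuinely constant.

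The main obstacle is the converse direction, specifically making the volume form of the constructed metric match $(d\alpha)^n$ on the nose so that $f$ is forced to be positive (indeed constant), and simultaneously arranging $\div(X) = 0$. The issue is that an arbitrary $d\alpha$-compatible metric on the contact distribution produces $\mu$ and $(d\alpha)^n$ that agree only up to a positive function, which already suffices for the rotational Beltrami conclusion, but to also get $\div(X) = 0$ one must conformally rescale or otherwise tune the metric along the $X$-direction — the standard trick is to rescale $\|X\|_g^2$ by a function chosen so that the volume forms match exactly, which is possible precisely because the discrepancy is a positive function and $X$ is nowhere zero. Verifying that this rescaling does not destroy the Beltrami property requires re-examining the curl under a conformal-type change, and this bookkeeping — carried out in dimension $3$ in \cite{etnyre1997contact} and in general in \cite{CMPP2} — is where the real work lies; the contact and Reeb-like conditions themselves are immediate from the definitions once the ansatz is written down.
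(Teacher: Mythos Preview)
Your forward direction matches the paper's proof exactly: take $\alpha = g(X,-)$, compute $\alpha\wedge(d\alpha)^n = f\|X\|^2\mu$ to get the contact condition, then use $\iota_X(d\alpha)^n = 0$ together with non-degeneracy of $d\alpha$ on $\ker\alpha$ to conclude $\iota_X d\alpha = 0$. For the converse your overall strategy---build a metric with $\alpha = g(X,-)$, declare $X \perp \ker\alpha$, and put a $d\alpha$-compatible inner product on $\ker\alpha$---is also the paper's.

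There is, however, a genuine error in your normalization step: the rescaling $\alpha \mapsto \alpha' = \alpha/\alpha(X)$ does \emph{not} in general preserve the Reeb-like condition for $X$. Writing $h = 1/\alpha(X)$, one computes $\iota_X d(h\alpha) = X(h)\,\alpha - \alpha(X)\,dh$, which vanishes only if $Y(h)=0$ for every $Y\in\ker\alpha$; since the contact distribution is bracket-generating, this forces $h$ (hence $\alpha(X)$) to be constant. So you cannot reduce to the case where $X$ is the honest Reeb field. The paper sidesteps this entirely by never normalizing: it takes $\alpha = g(X,-)$ as is, accepting $\|X\|^2 = \alpha(X)$ possibly non-constant, and the single line $\iota_X\iota_{\curl(X)}\mu = \iota_X(d\alpha)^n = 0$ already forces $\curl(X)\parallel X$; positivity of the proportionality factor follows from comparing the signs of $\alpha\wedge(d\alpha)^n$ and $\mu$. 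For the divergence-free refinement there is likewise no need to rescale in the $X$-direction and then worry about what happens to $\curl$: one just tunes the metric on $\ker\alpha$ so that $\iota_X\mu$ equals $(d\alpha)^n$ on the nose, a condition on the transverse volume only, which leaves the Beltrami computation untouched. Your final paragraph therefore overstates where the work lies.
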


\begin{proof}
    Suppose that $X$ is a nowhere vanishing Beltrami field on a Riemannian manifold $(M, g)$. Then, taking $\alpha = g(X, -)$, we have
    $$
        \alpha \wedge (d\alpha)^n = \alpha \wedge \iota_{\curl(X)} \mu = f \alpha \wedge \iota_X \mu = f ||X||^2 \mu \neq 0,
    $$
    where in the last equality we have used that $0 = \iota_X(\alpha \wedge \mu) = \alpha(X) \mu - \alpha \wedge \iota_X \mu = ||X||^2\mu - \alpha \wedge \iota_X$. This proves that $\alpha$ is a contact form. Now, observe that $\iota_X (d\alpha)^n = \iota_X \iota_{\curl(X)} \mu = 0$ and thus, since $d\alpha$ is non-degenerate on $\ker(\alpha)$, this implies that $\iota_X d\alpha = 0$, so $X$ is Reeb-like.

    Reciprocally, suppose that $\alpha$ is a contact form and $X$ is Reeb-like. Let us pick a metric $g$ on $M$ such that $\alpha = g(X, -)$. Then, such a metric converts $X$ into a Beltrami field since $\iota_X \iota_{\curl(X)} \mu = \iota_X (d\alpha)^n = n \iota_X d\alpha \wedge (d\alpha)^{n-1} = 0$, so $X$ and $\curl(X)$ are parallel. Furthermore, if the metric $g$ is chosen additionally satisfying that $\xi = \ker \alpha$ is orthogonal to $X$ and the Riemannian volume on $\xi$ agrees with the symplectic volume $(d\alpha)^n$, then $\iota_X \mu = (d\alpha)^n$ and thus $d\iota_X\mu = 0$ so $X$ is divergence-free.
\end{proof}

\begin{remark}
    If $\alpha$ is a contact form with Reeb vector field $R$, the metrics satisfying that $\alpha = g(R, -)$, $\xi = \ker \alpha$ is orthogonal to $R$, and $g|_{\xi}$ is compatible with the symplectic structure induced by $d\alpha$ on $\xi$, are called \emph{compatible metrics}. In Theorem \ref{thm:contact-mirror}, if $X$ is the Reeb vector field, then any compatible metric makes $X$ divergence-free and thus the metric used in Theorem \ref{thm:contact-mirror} is not unique.
\end{remark}

Theorem \ref{thm:contact-mirror} is particularly significant because Beltrami fields are actually special solutions of the Euler equations.

\begin{proposition}\label{prop:Beltrami-Euler}
    If $X$ is a divergence-free rotational Beltrami vector field, then it is a stationary solution to the Euler equation with constant Bernoulli function.
\end{proposition}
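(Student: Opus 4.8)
The plan is to show that a divergence-free rotational Beltrami field $X$ automatically satisfies the stationary Euler equations \eqref{eq:stationary-euler} with the Bernoulli function taken to be a suitable constant. The starting observation is that, by the Beltrami condition, $\curl(X) = fX$ with $f > 0$, which dually says $(d\alpha)^n = f\iota_X\mu = f\star\alpha$, where $\alpha = g(X,-)$ and we use that $\iota_X\mu = \star\alpha$. The divergence-free hypothesis gives $d\star\alpha = 0$, so this is already one of the two equations we need.

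The heart of the matter is to verify $\iota_X d\alpha = -dB$ for a constant $B$, i.e. to show $\iota_X d\alpha = 0$. First I would argue that $X$ is a Reeb-like vector field: this is exactly the content of the first half of Theorem \ref{thm:contact-mirror}, which tells us that a nowhere vanishing rotational Beltrami field is Reeb-like for the contact form $\alpha$, meaning precisely $\iota_X d\alpha = 0$. (Note a rotational Beltrami field is nowhere vanishing, since $f>0$ forces $(d\alpha)^n$ to be nonzero, hence $\|X\|^2\mu = f^{-1}\alpha\wedge(d\alpha)^n \neq 0$.) Plugging $\iota_X d\alpha = 0$ into the dual stationary Euler equation \eqref{eq:stationary-euler}, we are left with $0 = -dB$, which is solved by any constant $B$; taking $B \equiv 0$ (or any constant) completes the verification. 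Finally I would translate back from the dual form $\alpha$ to the vector field $X$: since $\iota_X d\alpha = \langle \nabla_X X, -\rangle - \tfrac12 d(\|X\|^2)$ by the computation carried out earlier in the dual-formulation subsection, the condition $\iota_X d\alpha = 0$ together with $dB = 0$ yields $\nabla_X X = -\nabla p$ with $p = B - \tfrac12\|X\|^2$, so $(X, p)$ is a genuine stationary Euler flow.

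There is essentially no obstacle here once Theorem \ref{thm:contact-mirror} is in hand: the proposition is almost a corollary of it, and the only content is the bookkeeping of assembling the dual stationary Euler equation and observing that the Beltrami plus divergence-free hypotheses kill both terms that could obstruct stationarity. The one point that deserves a sentence of care is why $\iota_X d\alpha = 0$ rather than merely $\iota_X d\alpha$ being proportional to $\alpha$: this uses that $d\alpha$ is non-degenerate on $\ker\alpha$ (the contact condition), so that the single relation $\iota_X(d\alpha)^n = \iota_X\iota_{\curl(X)}\mu = 0$ forces $\iota_X d\alpha$ to vanish identically on $\ker\alpha$, and it vanishes on $X$ itself trivially by antisymmetry.
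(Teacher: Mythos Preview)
Your argument is correct and follows essentially the same route as the paper: both reduce to showing $\iota_X d\alpha = 0$ by first computing $\iota_X(d\alpha)^n = \iota_X\iota_{\curl(X)}\mu = 0$ and then using the contact non-degeneracy of $d\alpha$ (via Theorem~\ref{thm:contact-mirror}) to conclude. One small slip: your parenthetical claim that a rotational Beltrami field is automatically nowhere vanishing is circular, since $(d\alpha)^n = f\,\iota_X\mu$ vanishes exactly where $X$ does, so $f>0$ alone does not force $(d\alpha)^n \neq 0$; the nowhere-vanishing hypothesis is needed to invoke Theorem~\ref{thm:contact-mirror} and should be taken as implicit (as the paper also does) rather than derived.
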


\begin{proof}
    Since $X$ is a Beltrami field, for the dual form $\alpha = g(X, -)$ we have
    $$
        \iota_X(d\alpha)^n = \iota_X\iota_{\curl(X)}\mu = f\iota_X\iota_{X}\mu = 0. 
    $$
    Now, since $d\alpha$ is maximally non-degenerate by Theorem \ref{thm:contact-mirror}, then $\iota_X (d\alpha)^n = 0$ implies $\iota_X d\alpha = 0$. This exactly means that $X$ satisfies the stationary Euler equation (\ref{eq:stationary-euler}) with $dB = 0$.
\end{proof}

\begin{remark}
    In dimension higher than $3$, it is not generally true that a non-rotational Beltrami vector field is a stationary Euler flow. The reason is that, in that case, $d\alpha$ might not be non-degenerate and thus $\iota_X(d\alpha)^n = 0$ does not imply that $\iota_X d\alpha = 0$. A counterexample can be found in \cite[Theorem 4.3]{cardona2021steady}. However, if $d\alpha$ is generically non-degenerate, then $\iota_X d\alpha = 0$ generically and thus everywhere by continuity (cf.\ \cite[Proposition 4.7]{cardona2021steady}).
\end{remark}

\begin{remark}
    If $X$ is an Euler flow, the vector field $\curl(X)$ is called the \emph{vorticity} of the flow and plays an important role in the study of the solutions \cite{judovivc1963non,majda2003vorticity,zlatovs2015exponential}. In this language, a Beltrami field is precisely one that is collinear with its vorticity.
\end{remark}

\begin{corollary}\label{cor:mirror-solutions}
    Any vector field that is Reeb-like for some contact form is a stationary Euler flow for some metric.
\end{corollary}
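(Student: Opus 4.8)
The plan is to obtain the statement as a direct concatenation of the two results just proved: Theorem \ref{thm:contact-mirror} translates the contact-geometric hypothesis into a Beltrami condition for a cleverly chosen metric, and Proposition \ref{prop:Beltrami-Euler} then upgrades that Beltrami field to a stationary Euler solution. So the proof is essentially ``apply Theorem \ref{thm:contact-mirror}, then apply Proposition \ref{prop:Beltrami-Euler}'', and the only thing to check is that the hypotheses line up.

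Concretely, suppose $X$ is Reeb-like for a contact form $\alpha$ on the odd-dimensional orientable manifold $M$, so that $\iota_X d\alpha = 0$; by the remark following the definition of contact form, $X$ spans the line field transverse to $\xi = \ker\alpha$ and is in particular nowhere vanishing (after possibly replacing $\alpha$ by $-\alpha$ we may also assume $\alpha(X) > 0$). First I would invoke the ``reciprocal'' half of Theorem \ref{thm:contact-mirror}: there is a Riemannian metric $g$ on $M$ for which $X$ is a rotational Beltrami field, i.e. $\curl(X) = fX$ with $f > 0$ everywhere, and — crucially — the last sentence of that theorem lets us take $g$ so that in addition $\div(X) = 0$. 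Then I would feed $(M,g)$ into Proposition \ref{prop:Beltrami-Euler}: a divergence-free rotational Beltrami vector field is a stationary solution of the Euler equation (\ref{eq:stationary-euler}) with constant Bernoulli function $B$. Hence $X$ is a stationary Euler flow for the metric $g$, which is exactly the claim.

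Since every ingredient is already in place, there is no real obstacle; the one point that keeps this from being utterly automatic is that ``stationary Euler flow'' packages together two conditions, $\iota_X d\alpha = -dB$ and $\div(X) = 0$, whereas the Reeb/Beltrami correspondence of Theorem \ref{thm:contact-mirror} a priori only governs the first — the Euler equation itself being indifferent to divergence. The resolution is precisely the extra latitude in choosing the metric (any compatible metric when $X$ is the Reeb field, or the metric constructed in the proof of Theorem \ref{thm:contact-mirror} in general), so nothing beyond citing the two previous results is required.
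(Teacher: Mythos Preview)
Your proposal is correct and matches the paper's approach exactly: the corollary is stated without proof precisely because it is the immediate concatenation of Theorem \ref{thm:contact-mirror} (Reeb-like $\Rightarrow$ divergence-free rotational Beltrami for a suitable metric) with Proposition \ref{prop:Beltrami-Euler} (divergence-free rotational Beltrami $\Rightarrow$ stationary Euler flow). Your observation that the divergence-free condition must be secured separately via the metric choice in Theorem \ref{thm:contact-mirror} is the one non-automatic point, and you handle it correctly.
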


Corollary \ref{cor:mirror-solutions} has several striking consequences derived from contact topology:
\begin{itemize}
    \item In dimension $3$, every orientable manifold admits a contact structure by Martinet's theorem \cite{martinet1971formes}, and thus admits a non-vanishing stationary Euler flow (in fact, it admits infinitely many ones).
    \item For every knot $K \subseteq \SS^3$, there exists an Euler flow in $\SS^3$ with a closed orbit equivalent to $K$. It was originally proved in \cite{ghrist1997branched} for some adapted metric, and in \cite{enciso2012knots} for the round metric and with vortex tubes (solving the so-called Kelvin's conjecture).
    \item Furthermore, there exists an Euler flow in $\SS^3$ (for some adapted metric) with closed orbits of all knot types simultaneously \cite{etnyre2000contact}.
\end{itemize}

\begin{remark}
    Theorem \ref{thm:contact-mirror} has an analogue in the realm of cosymplectic geometry, given by a pair of closed forms $\alpha \in \Omega^1(M)$ and $\omega \in \Omega^2(M)$ such that $\alpha \wedge \omega$ is a volume form. In this setting, in \cite{dyhr2025turing} we show that cosymplectic structures are in correspondence with harmonic vector fields. These harmonic vector fields lie in the kernel of the $\curl$ operator and not only produce stationary solutions to the Euler equations but also provide stationary solutions to the full Navier-Stokes equations.
\end{remark}

\section{Computability in Dynamical Systems}

\subsection{Universality for discrete dynamical systems}

A discrete dynamical system is a pair $(A, f)$, where $A$ is a manifold and $f$ is a bijective function $f: A \to A$, not necessarily continuous. Analogously to the continuous case, the orbit of a point $a \in A$ is the collection $\{f^n(a)\}_{n \in \ZZ}$. However, in this case it is not straightforward to define what it means to embed such a discrete system into a continuous one. To address this, we introduce the notion of a Poincar\'e first return map.

\begin{definition}
    Let $M$ be a manifold equipped with a vector field $X$, whose associated flow we denote by $\varphi_t$. A \emph{Poincar\'e section} is a submanifold $A \subseteq M$ of codimension $1$, possibly with boundary, satisfying the following conditions:
    \begin{itemize}
        \item $A$ is transverse to the flow of $X$, i.e., $X_a$ is not tangent to $A$ for any $a \in A$.
        \item $A$ is non-wandering, i.e., for every $a \in A$ we have that $\varphi_t(a) \in A$ for some $t > 0$. 
    \end{itemize} 
\end{definition}

Observe that, since $X$ is transverse to the Poincar\'e section $A$, the flow $\varphi_t(a)$ of $a \in A$ intersects $A$ at discrete times. In particular, it makes sense to define the first return time $t_a$ as
$$
    t_a = \min\left\{t > 0 \,\mid\, \varphi_t(a) \in A\right\}.
$$

\begin{definition}
    Given a manifold $M$ endowed with a vector field $X$ with flow $\varphi_t$, and a Poincar\'e section $A \subseteq M$, the \emph{Poincar\'e first return map} is the function $P_A: A \to A$ given by $P_A(a) = \varphi_{t_a}(a)$.
\end{definition}


\begin{definition}
    An \emph{embedding} of a discrete dynamical system $f: A \to A$ into a continuous dynamical system $(M, X)$ is an embedding of smooth manifolds $\varphi: A \to M$ such that $\varphi(A) \subseteq M$ is a Poincar\'e section and $\varphi \circ f = P_A \circ \varphi$.
\end{definition}

\begin{proposition}\label{prop:mapping-torus}
    If $f: A \to A$ is a smooth diffeomorphism, then $(A, f)$ can be embedded into the mapping torus of $f$.        
\end{proposition}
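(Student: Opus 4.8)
The plan is to construct the mapping torus of $f$ explicitly, equip it with the vector field coming from the suspension flow, and verify that $A$ (embedded as a fiber) is a Poincaré section whose first return map is $f$.

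First, I would define the mapping torus
$$
    M_f = \left(A \times [0,1]\right) / \left((a, 1) \sim (f(a), 0)\right),
$$
which is a smooth manifold because $f$ is a diffeomorphism (the gluing map is smooth with smooth inverse, so the quotient inherits a smooth structure; near the gluing locus one uses the chart $(a,t)\mapsto (a,t)$ for $t$ near $1$ matched with $(a',s)\mapsto (f^{-1}(a'), s)$ for $s$ near $0$). On $A \times [0,1]$ the vector field $\partial_t$ is well defined, and since the identification $(a,1)\sim(f(a),0)$ sends $\partial_t$ to $\partial_t$, it descends to a smooth vector field $X$ on $M_f$. Its flow is $\varphi_s[a,t] = [a, t+s]$, interpreted modulo the gluing, i.e. the suspension flow of $f$.

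Next I would define the embedding $\varphi: A \hookrightarrow M_f$ by $\varphi(a) = [a, 0]$. This is a smooth embedding onto the fiber $A \times \{0\}$, which has codimension $1$. It is transverse to $X$ because $X$ is everywhere equal to $\partial_t$, which is transverse to the level set $\{t = 0\}$. It is non-wandering because the flow starting at $[a,0]$ returns to the fiber at time $1$: $\varphi_1[a,0] = [a,1] = [f(a),0] \in \varphi(A)$. Moreover the first return time is exactly $t_a = 1$ for every $a$, since $[a,t]$ lies in the fiber $\varphi(A)$ only when $t \in \ZZ$, and the smallest positive such $t$ is $1$. Hence $P_A(\varphi(a)) = \varphi_1[a,0] = [f(a), 0] = \varphi(f(a))$, which is precisely the required identity $\varphi \circ f = P_A \circ \varphi$.

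There is no serious obstacle here; the only point requiring a little care is checking that $M_f$ is genuinely a smooth manifold and that $\partial_t$ descends smoothly across the gluing region — this is where the hypothesis that $f$ is a \emph{diffeomorphism} (rather than merely a bijection) is used. Everything else is a direct unwinding of the definitions of mapping torus, suspension flow, Poincaré section, and first return map.
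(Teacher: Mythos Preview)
Your proof is correct and follows exactly the same approach as the paper: build the mapping torus $M_f$, take the vertical field $\partial_t$, and observe that the fiber $A\times\{0\}$ is a Poincar\'e section with first return map $f$. The paper's version is a one-line statement of this, while you have (correctly) supplied the verifications; note also that your gluing convention $(a,1)\sim(f(a),0)$ differs from the paper's $(a,0)\sim(f(a),1)$, but this is immaterial.
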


\begin{proof}
    Define the mapping torus of $f$ as $M_f = A \times [0,1]/(a, 0) \sim (f(a), 1)$. Then $A \times \{0\}$ is a Poincar\'e section for the vertical vector field $\partial_t$, and the associated Poincar\'e return map is exactly the map $f$.
\end{proof}

These embeddings of discrete into continuous dynamical systems allow us to define an analogous notion of universality in this setting.

\begin{definition}
    Let $\mathcal{C} = \{(A, f)\}$ be a collection of discrete dynamical systems. We will say that a class $\mathcal{U} = \{(M, X)\}$ of continuous dynamical systems is \emph{universal} for $\mathcal{C}$ if for every $(A, f) \in \mathcal{C}$, there exists $(M, X) \in \mathcal{U}$ and an embedding of $(A, f)$ into $(M, X)$.
\end{definition}

In this language, Proposition \ref{prop:mapping-torus} can be re-stated as that mapping tori provide a universal class for smooth discrete dynamical systems.

\subsection{Turing machines as discrete dynamical systems}

In this section, we shall show that a computer, or better said the mathematical formalization of it called a Turing machine, can be seen as a discrete dynamical system. Roughly speaking, a Turing machine is a read/write device that has access to a two-sided infinite tape as memory divided into cell memories. Each cell memory can be written with a symbol from a prefixed alphabet or left empty. This read/write device is represented by a head that is placed on top of one of the symbols of the tape. At each step, the head reads the symbol below and, using this information and its internal state, substitutes the symbol with another one, shifts the tape either left or right, and changes its internal state. This process is performed until the head reaches a halting state in which the computation stops, or continues indefinitely. Figure \ref{fig:TuringMachine} provides a pictorial representation of a Turing machine.

\begin{figure}[H]
    \centering
    \includegraphics[scale=0.4]{./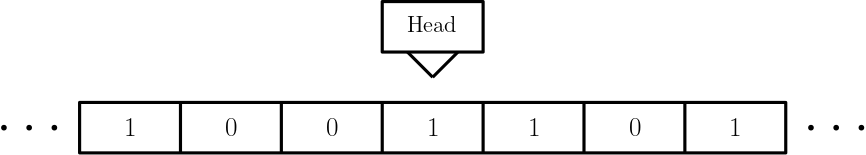}
    \caption{Representation of a Turing machine.}
    \label{fig:TuringMachine}
\end{figure}

This intuitive notion can be formalized as follows. Let us consider a non-empty finite set $A$, called the \emph{alphabet}. This alphabet contains a special symbol, denoted by $\square \in A$ and called the \emph{blank symbol}.

\begin{definition}[Adapted from {\cite{turing1936computable}}]
A \emph{Turing machine} $M$ with alphabet $A$ is a tuple $(Q, q_0, Q_{\mathrm{halt}}, \delta)$ where:
\begin{itemize}
\item $Q$ is a finite set, called the \emph{states} of $M$, and $q_0\in Q$ is known as the \emph{initial state}.
\item $Q_{\mathrm{halt}} \subsetneq Q$ is a subset called \emph{halting states}.
\item $\delta$ is a function
\[
  \delta : Q \times A \longrightarrow Q \times A \times \{-1,+1\},
\]
called the \emph{transition function}.
\end{itemize}
\end{definition}

The transition function $\delta: Q \times A \to Q \times A \times \{-1,+1\}$ represents the operation performed by the read/write head. Hence, if $\delta(q,a)=(q',a',s)$, this means that when the head reads the symbol $a \in A$ in the tape when at state $q \in Q$, the head substitutes it by $a' \in A$, shifts the tape by $s = \pm 1$ positions, and changes its internal state to $q' \in Q$. The content of a tape will be represented by an element in { the set $A^*$ of compactly supported two-sided sequences in $A$, i.e.,\ sequences $t = \{t_n\}$ with $t_n \in A$ satisfying that $t_m = \square$ for sufficiently large $|m|$}. In this manner, if $t = \{t_n\} \in A^*$ is the current tape and $a = t_0$ is the read symbol, the new tape will be $t' = \{t'_n\} \in A^*$ with $t_{n}'=t_{n+s}$ if $n \neq -s$ and $t_{-s} = a'$.

In this way, if we denote $T = Q \times A^*$, we have that a Turing machine $M$ defines a discrete dynamical system on $T$
$$
    \Delta_M: T \longrightarrow T.
$$
This is precisely the discrete dynamical system that we aim to represent inside a continuous dynamical system.

It turns out that there is a better way of representing this dynamical system. Let us consider $B = \{0,1\}$ {with $0$ being the blank symbol}, so that $B^*$ is the set of two-sided binary sequences with finitely many $1$'s. A \emph{generalized shift} is a pair $S=(r, G, F)$, where $r > 0$ is an integer, $G$ is a function $G: B^r \to B^r$ and $F$ is a function $F: B^r \to \ZZ$. This information defines a dynamical system
$$
    \Lambda_S: B^* \longrightarrow B^* 
$$
as follows. Let $t = \{t_n\} \in B^*$ and set $(b_0, \ldots, b_{r-1}) = G(t_0, \ldots, t_{r-1})$ and $s = F(t_0, \ldots, t_{r-1})$. Then, $\Lambda_S(t)$ is the sequence $t' = \{t_n'\}$ given by $t_n' = t_{n+s}$ for $n \not\in [-s, r-s+1]$ and $t_n' = b_{n+s}$ for $n \in [-s, r-s+1]$. In other words, $\Lambda_S(t)$ is the sequence where we have replaced $t_0, \ldots, t_{r-1}$ by $b_0, \ldots, b_{r-1}$ and then shift $s$ positions. A representation of this operation of a generalized shift is shown in Figure \ref{fig:generalized_shift}.

\begin{figure}[H]
    \centering
    \includegraphics[scale=0.45]{./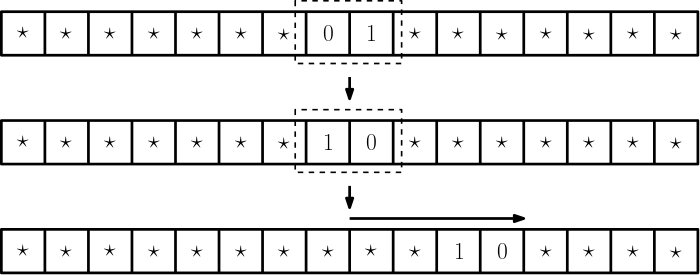}
    \caption{Representation of a generalized shift with $r = 2$, $G(0,1) = (1,0)$ and $F(0,1) = 3$.}
    \label{fig:generalized_shift}
\end{figure}

\begin{remark}
    The original definition of a generalized shift, as formulated in \cite{moore1991generalized}, includes the possibility that the ranges of action of $G$ and $F$ are different, and not starting at zero. However, this can be easily arranged by expanding the domains of $G$ and $F$ to a common domain, and then relabeling the positions of the sequence so that the starting point of the domain is the zero position.
\end{remark}

Coming back to Turing machines, observe that, since the states $Q$ and alphabet $A$ are finite, we can consider an injection $\varphi: Q \cup A \to B^*$ by encoding them in binary. This defines an injective map $\varphi^*: T = Q \times A^* \to B^*$ by sending $(q, t) \in T$ to the sequence that starts with $\varphi(q) \in B^*$ in the zero position, then places the substring $\{t_n\}_{n \geq 0}$ of $t$ to the right of $\varphi(q)$ and $\{t_n\}_{n < 0}$ to its left. In this direction, Moore identified the image of such embedding as a generalized shift.

\begin{theorem}[\cite{moore1991generalized}]\label{thm:TM-as-GS}
    Given a Turing machine $M$ with dynamical system $\Delta_M: T \to T$ and an embedding $\varphi: Q \cup A \to B^*$ that induces $\varphi^*: T \to B^*$, there exists a generalized shift $S$ on $B^*$ such that
    $$
       \varphi^* \circ \Delta_M = \Lambda_{S} \circ \varphi^*.
    $$
\end{theorem}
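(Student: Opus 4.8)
The plan is to make explicit how one step of the Turing machine $\Delta_M$ acts on the encoded sequence $\varphi^*(q,t) \in B^*$ and to recognize that this action depends only on finitely many consecutive symbols around the origin, which is precisely the defining property of a generalized shift. First I would fix notation: the injection $\varphi\colon Q\cup A\to B^*$ encodes each state and each symbol as a binary block of some fixed length $\ell$ (padding with zeros so that all blocks have the same length; this is harmless because $Q$ and $A$ are finite). Under $\varphi^*$, the configuration $(q,t)$ becomes the biinfinite binary sequence whose block at positions $[0,\ell)$ is $\varphi(q)$, whose block at positions $[\ell,2\ell)$ is $\varphi(t_0)$, and more generally whose block at positions $[k\ell,(k+1)\ell)$ for $k\ge 1$ is $\varphi(t_{k-1})$ and at positions $[-k\ell,-(k-1)\ell)$ for $k\ge 1$ is $\varphi(t_{-k})$. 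The key observation is that a single application of $\delta$ reads only $q$ and $t_0$, i.e. only the symbols in positions $[0,2\ell)$, and then rewrites a bounded window and shifts by $\pm\ell$.

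Next I would carry out the translation of the transition rule into generalized-shift data. Set $r = 2\ell$, so that a generalized shift on $B^*$ with window size $r$ sees exactly the blocks encoding $q$ and $t_0$. Given the window contents $(t_0,\dots,t_{r-1})$, decode them as a pair $(q,a)\in Q\times A$ (if the window does not decode to a legitimate pair, or if $q\in Q_{\mathrm{halt}}$, define $G$ to be the identity and $F$ to be $0$ on that input, so that halting configurations are fixed points, matching the convention that $\Delta_M$ is the identity there). If $\delta(q,a) = (q',a',s)$, then after the step the new sequence should have: the block $\varphi(q')$ where the head now sits, the block $\varphi(a')$ replacing the old $\varphi(a)$, and the whole tape shifted by one cell, i.e. by $\ell$ binary positions, in the direction $s$. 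One checks that all of the cells whose encodings change lie within a bounded neighbourhood of the origin — concretely within positions $[-\ell, 3\ell)$ or so, depending on the sign of $s$ — so by enlarging the window $r$ slightly (still a fixed finite number depending only on $\ell$) and, if necessary, recentering as in the remark following the definition of generalized shift, one can define $G\colon B^r\to B^r$ to implement the rewrite of this window and $F\colon B^r\to\ZZ$ to output $s\ell$. The verification that $\varphi^*\circ\Delta_M = \Lambda_S\circ\varphi^*$ then amounts to comparing, position by position, the two biinfinite sequences: outside the affected window both sides are the shift of the old sequence by $s\ell$, and inside the window both sides equal the rewritten block by construction.

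The main obstacle — really a bookkeeping subtlety rather than a conceptual one — is the mismatch between the "rewrite then shift by $s$" format of a generalized shift, in which the rewrite window $[0,r)$ and the shift amount must be coordinated so that the rewritten symbols land in the right places, and the fact that the head block and the modified tape cell sit in slightly different positions before and after the move (the head moves, the written symbol stays). Handling the two cases $s=+1$ and $s=-1$ separately, and making sure the generalized-shift window is large enough to contain, simultaneously, the old head block, the old read cell, the new head block, and the new written cell, is where the care is needed; this is exactly the reason Moore's original formulation allows the ranges of $G$ and $F$ to be offset from zero, and why the remark above the theorem notes that one can always normalize to a common window starting at position $0$ by relabeling. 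Once the window is chosen generously, the definitions of $G$ and $F$ are forced by the requirement above, and the intertwining identity is a direct check.
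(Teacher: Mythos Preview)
The paper does not give its own proof of this theorem: it is stated with a citation to Moore and immediately followed by the next subsection, so there is nothing to compare against line by line. Your sketch is correct and is essentially Moore's original argument --- the only real content is exactly the bookkeeping you flag, namely that for the two signs of $s$ the rewritten block (state, written symbol, and possibly the neighbouring cell $t_{-1}$) sits in slightly different positions, so the window must be taken of length $3\ell$ (not $2\ell$) and centred so as to see $t_{-1}$, $q$, and $t_0$ simultaneously; once that is done, $G$ and $F$ are determined and the intertwining is a direct check, as you say.
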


\subsection{Generalized shifts in the square Cantor set}\label{sec:cantor}

As seen in the previous section, the generalized shift map $\Lambda_S: B^* \to B^*$ provides a class of discrete dynamical systems that encode computational dynamics. Despite its explicit description, the base set $B^*$ of two-sided binary sequences is not a smooth manifold and, in particular, it is not clear how to represent it in the sense of Definition \ref{def:universality}.

Fortunately, $B^*$ can be naturally embedded in a manifold using a combinatorial trick. Let $C$ be the \emph{standard ternary Cantor set} $C \subset [0,1]$, which is constructed by removing the mid-segment of a division of the interval $I$ into three equal intervals and proceeding recursively. Explicitly,
$$
    C = \bigcap_{m=0}^\infty C_m, 
$$
where $C_0 = [0,1]$ and, for $m \geq 1$, the set $C_m$ is given recursively as
$$
    C_m = \frac{1}{3}\left(C_{m-1} \cup (2 + C_{m-1})\right).
$$
We can also consider the ternary $n$-th Cantor set $C^n \subseteq [0,1]^n$. For the purposes of this work, the square Cantor set $C^2 \subseteq [0,1]^2$ will play an important role.

Using the Cantor set, we can perform several interesting encodings as follows. 
\begin{enumerate}
    \item The Cantor set $C$ is the collection of real numbers $0 \leq x \leq 1$ with an expansion in base $3$ not containing a digit $1$, i.e., of the form $x = \sum_{n \geq 1} \epsilon_i 3^{-i}$, with $\epsilon_i = 0 $ or $2$ for every $i$. Using the previous description, the Cantor set $C$ can be identified with the collection of one-sided binary sequences.
    \item In the same vein, the square Cantor set $C^2$ can be identified with the collection of two-sided binary sequences. In fact, if $\{t_n\}_{n \in \ZZ}$ is a two-sided sequence, then we decompose it into the two one-sided sequences $t_+ = \{t_n\}_{n \geq 0}$ and $t_- = \{t_n\}_{n < 0}$. Encoding $t_+$ in the first factor of $C^2$ and $t_-$ in the second factor, we get the identification. In particular, we get a natural inclusion $\kappa: B^* \hookrightarrow C^2$ of the eventually zero two-sided sequences.
\end{enumerate}

By embedding the square Cantor set $C^2$ into the closed $2$-dimensional disc $\mathbb{D}$ of radius $2$, we obtain an embedding
$$
    \kappa: B^* \hookrightarrow \mathbb{D}.
$$
Under this embedding, sequences beginning with a fixed finite substring can be easily identified. Let $b = (b_0,\ldots, b_{r-1}) \in B^*$ be a finite word, and denote by $B_{b}^*$ the collection of sequences $t = \{t_n\}$ with $t_0 = b_0, t_1 = b_1, \ldots, t_{r-1} = b_{r-1}$. The subset $\kappa(B_b^*) \subseteq \DD$ is contained in a square block in $\DD$, as shown in Figure \ref{fig:cantor-block}.

\begin{figure}[H]
    \centering
    \includegraphics[scale=0.3]{./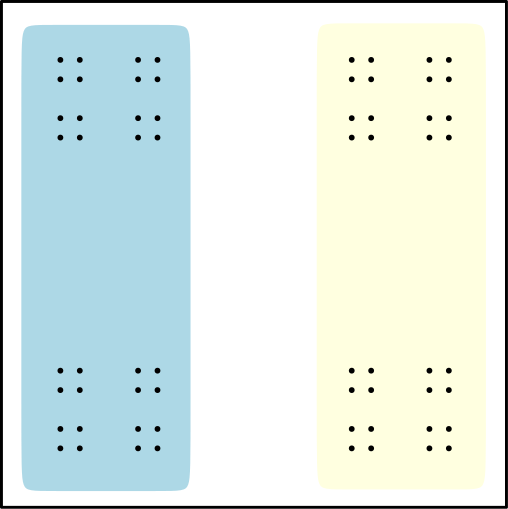}
    \caption{Cantor blocks corresponding to the two-sided finite binary sequences $B_{0}$ starting with $0$ (blue) and $B_{1}$ starting with $1$ (yellow).}
    \label{fig:cantor-block}
\end{figure}

This embedding allows us to extend bijective generalized shifts $\Lambda_S: B^* \to B^*$ to simple diffeomorphisms of the disc. They are determined by a finite set of words $b^1, \ldots, b^n \in B^*$ of the same length and a permutation $\sigma: \{1, \ldots, n\} \to \{1, \ldots, n\}$. Send linearly the blocks $\kappa(B_{b^k}^*) \subseteq \DD$ into the blocks $\kappa(B_{\sigma(k)}^*) \subseteq \DD$, and extend the map to the whole disc in such a way that the map on the boundary of $\DD$ is the identity. Observe that all the involved blocks have the same volume, so the extension can be done volume-preserving. Hence, this gives rise to a volume-preserving diffeomorphism $f: \mathbb{D} \to \mathbb{D}$ that is the identity on the boundary, called a \emph{block diffeomorphisms} of the disc.

\begin{theorem}[\cite{moore1991generalized,CMPP}]\label{thm:GS-as-block}
    For every bijective generalized shift $S$, there exists a block diffeomorphism $f$ of the disc such that
    $$
         \kappa \circ \Lambda_S = f \circ \kappa.
    $$
\end{theorem}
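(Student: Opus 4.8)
The plan is to reduce the statement about a general bijective generalized shift to the block-permutation picture already set up before the theorem, and then invoke the explicit construction of block diffeomorphisms. First I would unwind the action of a bijective generalized shift $S = (r, G, F)$ on $B^*$ in terms of the finite prefixes of length $r$: the map $\Lambda_S$ depends only on the window $(t_0,\dots,t_{r-1})$, so we partition $B^*$ into the $2^r$ cylinders $B^*_{b}$ indexed by words $b \in \{0,1\}^r$. On the cylinder $B^*_b$ the map $\Lambda_S$ replaces the window by $G(b)$ and shifts by $s_b = F(b)$; I would check that, after the shift, every point of $\Lambda_S(B^*_b)$ lies in a single cylinder of length $r + |s_b|$ determined by $b$, and that — because $\Lambda_S$ is a bijection — these image cylinders are pairwise disjoint and their union is all of $B^*$. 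Thus, after refining all cylinders to a common length $N = r + \max_b |F(b)|$ (replacing each $B^*_b$ by the disjoint union of the $2^{N-r}$ cylinders $B^*_{c}$, $c \in \{0,1\}^N$, that it contains), the generalized shift is exactly a permutation $\sigma$ of the set of length-$N$ words: $\Lambda_S(B^*_c) = B^*_{\sigma(c)}$, with the two-sided sequence outside the affected window transported rigidly (possibly shifted, but the shift is absorbed by the relabeling of which coordinate counts as ``position $0$'' inside the fixed-length cylinders).

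Next I would translate this through the embedding $\kappa: B^* \hookrightarrow C^2 \subseteq \DD$ recalled in Section~\ref{sec:cantor}. Each cylinder $B^*_c$ with $c$ of length $N$ maps under $\kappa$ into a Cantor block: a sub-square of $\DD$ of side $3^{-k}$ in each Cantor direction (the exact dimensions depending on how many coordinates of $c$ are non-negative versus negative), all such blocks for words of a fixed length being pairwise disjoint, congruent up to the obvious scalings, and in particular of equal area. The permutation $\sigma$ then dictates a bijection between these blocks, which I realize by an affine map on each block — a translation composed with, if needed, a reflection to match orientations of the Cantor-coordinate axes — sending $\kappa(B^*_c)$ onto $\kappa(B^*_{\sigma(c)})$ and intertwining $\kappa$ with $\Lambda_S$ by construction. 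This is precisely the block diffeomorphism recipe spelled out just before the theorem statement.

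Finally I would extend this partially-defined map, which so far is only prescribed on a Cantor set of blocks, to a genuine volume-preserving diffeomorphism $f: \DD \to \DD$ equal to the identity near $\partial\DD$. Here I would argue as follows: the complement of the union of the chosen blocks is an open region, and the map permutes a finite family of equal-area squares inside a fixed larger square by orientation-matching affine maps; by the Moser-type argument (or by the elementary fact that any measure-preserving combinatorial rearrangement of equal sub-cubes of a cube extends to a volume-preserving diffeomorphism fixing the boundary, as used in \cite{moore1991generalized,CMPP}) one obtains the desired extension $f$. Choosing the blocks to sit well inside $\DD$ of radius $2$ guarantees room for the interpolation and for $f$ to be the identity on the boundary. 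Setting $f$ to be this extension yields $\kappa \circ \Lambda_S = f \circ \kappa$, as desired.

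The main obstacle I anticipate is not any single step but the bookkeeping in the first paragraph: making sure that after shifting by $s_b = F(b)$ the images genuinely organize into disjoint cylinders of a uniform length and exhaust $B^*$, and correctly tracking how the shift interacts with the splitting of a two-sided sequence into its non-negative and negative halves (which is what the two Cantor factors encode). The bijectivity hypothesis on $S$ is exactly what makes the resulting block-to-block assignment a permutation rather than merely an injection; once that is nailed down, the geometric realization and the volume-preserving extension are routine and already available from \cite{moore1991generalized,CMPP}.
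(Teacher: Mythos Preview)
Your strategy is the one the paper sketches: reduce $\Lambda_S$ to a finite block-to-block map on the square Cantor set via the cylinder decomposition, then invoke the equal-area extension to a volume-preserving diffeomorphism of $\DD$ fixing the boundary. The paper gives no proof beyond the paragraph preceding the statement, so your fleshing-out of the bookkeeping (refining to a common window length, using bijectivity of $\Lambda_S$ to get a bijection of blocks) is exactly what is needed and matches the approach in \cite{moore1991generalized,CMPP}.

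One point needs correction. You describe the induced map on each Cantor block as ``a translation composed with, if needed, a reflection''. This cannot be right when the shift $s_b = F(b)$ is nonzero: a source block $\kappa(B^*_c)$ with $c$ specifying positions $0,\dots,N-1$ is a rectangle of size $3^{-N}\times 1$ (in the two Cantor factors), while its image under $\Lambda_S$ is a cylinder specifying positions $-s_b,\dots,N-1-s_b$, hence a rectangle of size $3^{-(N-s_b)}\times 3^{-s_b}$ (or the obvious variant for $s_b<0$). These have equal area but different aspect ratios, so no rigid motion carries one to the other. The correct description, and what the paper means by ``send linearly'', is the diagonal affine map that stretches one Cantor axis by $3^{s_b}$ and contracts the other by $3^{-s_b}$, composed with a translation; this is exactly how the full shift acts on $C^2$ (the baker-type map) restricted to the block. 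With that amendment your argument goes through unchanged, since the extension step only uses that the block maps are area-preserving and affine.
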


\subsection{Turing-universality in Euler flows}

By Theorem \ref{thm:TM-as-GS}, generalized shifts represent the dynamical system of a Turing machine, which are embedded into the disc as block diffeomorphisms by Theorem \ref{thm:GS-as-block}. These observations lead to the following key definition.

\begin{definition}\label{defn:Turing-universal}
    A class $\mathcal{U} = \{(M, X)\}$ of continuous dynamical systems is said to be \emph{Turing-universal} if it is universal for the class $\mathcal{C} = \{(\mathbb{D}, f)\}$ of block diffeomorphisms of the disc.
\end{definition}

Few known examples exist of dynamical systems that are Turing-universal. One of the most important examples was proven by two of the authors of this work jointly with Cardona and Presas.

\begin{theorem}[\cite{CMPP}]\label{thm:Euler-universal}
    Let $M$ be an orientable $3$-dimensional manifold. Then, the Euler flows on $M$ are Turing-universal.
\end{theorem}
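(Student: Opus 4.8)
I would reduce the theorem to a statement in contact geometry. By Definition~\ref{defn:Turing-universal} together with Theorems~\ref{thm:TM-as-GS} and~\ref{thm:GS-as-block}, it is equivalent to show: for every orientable $3$-manifold $M$ and every block diffeomorphism $f\colon\mathbb{D}\to\mathbb{D}$ (an area-preserving diffeomorphism equal to the identity near $\partial\mathbb{D}$) there is a Riemannian metric on $M$ carrying a stationary Euler flow $X$ together with an embedding $\varphi\colon\mathbb{D}\hookrightarrow M$ whose image is a Poincar\'e section of $X$ with return map $\varphi\circ f\circ\varphi^{-1}$. By Corollary~\ref{cor:mirror-solutions} it suffices to produce, on such an $M$, a contact form whose Reeb field realises this section. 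The plan is first to treat $M=M_f$, the mapping torus of $f$, by turning the suspension vector field into a Reeb field, and then to transplant the local model into an arbitrary $M$.

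\textbf{Step 1: a contact form on $M_f$.} Writing $M_f=\mathbb{D}\times[0,1]/(x,0)\sim(f(x),1)$ with coordinates $(x,t)$, Proposition~\ref{prop:mapping-torus} already gives $\mathbb{D}\times\{0\}$ as a Poincar\'e section of $\partial_t$ with return map $f$, so I only need $\partial_t$ to span $\ker d\alpha$ for some contact form $\alpha$. Fix a primitive $\mu_0$ of the standard area form $\omega_0$ of $\mathbb{D}$; since $f$ is area-preserving, the identity near $\partial\mathbb{D}$, and $\mathbb{D}$ is simply connected, one has $f^{*}\mu_0-\mu_0=dh$ for a function $h$ vanishing near $\partial\mathbb{D}$. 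The point to be careful about is that a block diffeomorphism is built from translations of Cantor blocks, so $h\not\equiv 0$ and $f$ is never exact symplectic; consequently the naive ansatz $\alpha=\mu_t+C\,dt$ with $C$ constant and $d_{\mathbb{D}}\mu_t=\omega_0$ does \emph{not} have $\partial_t\in\ker d\alpha$. I would fix this by allowing the $dt$-coefficient to be a positive function: put $\alpha=\mu_t+g_t\,dt$ with $g_t>0$ and demand $\iota_{\partial_t}d\alpha=0$, which forces $\mu_t=\mu_0+d_{\mathbb{D}}\int_0^t g_s\,ds$ and hence $d\alpha=\omega_0$; then $\partial_t\in\ker d\alpha$ is automatic and $\alpha\wedge d\alpha=g_t\,dt\wedge\omega_0$ is nowhere zero exactly because $g_t$ never vanishes. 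Descent to $M_f$ reduces to the single scalar equation $\int_0^1 g_s\,ds=c-h\circ f^{-1}$ (the $dt$-coefficients match once $g_0=g_1$ is constant), which I would solve by $g_s=C+\phi(s)\bigl(c-h\circ f^{-1}-C\bigr)$, with $\phi$ a bump function, $\int_0^1\phi(s)\,ds=1$, vanishing to infinite order at $0$ and $1$, $c$ large enough that $c-h\circ f^{-1}>0$, and $C$ large enough that $g_s>0$ throughout. Then $\partial_t$ is Reeb-like for $\alpha$ (its rescaling $\partial_t/g_t$ is the Reeb field, with the same orbits and hence the same return map $f$), and Corollary~\ref{cor:mirror-solutions} makes it a stationary Euler flow on $M_f$ for a suitable metric. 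This proves the theorem for $M=M_f$.

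\textbf{Step 2: an arbitrary $M$.} Since any diffeomorphism of $\mathbb{D}$ fixing $\partial\mathbb{D}$ pointwise is isotopic rel boundary to the identity, $M_f\cong\mathbb{D}\times S^1$ is a solid torus, which I would embed in $M$ as a tubular neighbourhood of a smoothly embedded circle. Near $\partial M_f$ the form $\alpha$ is the standard germ $\mu_0+C\,dt$, so it extends to a contact form on all of $M$ by the flexibility of contact structures in dimension $3$: Martinet's theorem and Eliashberg's $h$-principle for overtwisted contact structures, applied relative to the boundary after a Lutz-type stabilisation performed in a collar of $\partial M_f$ disjoint from the region where the computation takes place. (Equivalently and perhaps more transparently, one can start from an open book decomposition of $M$, plumb it with the disc open book of monodromy $f$, and use the adapted Thurston--Winkelnkemper contact form, whose Reeb flow returns to a page by a diffeomorphism equal to $f$ on the plumbed-in disc.) In either case the Reeb field $R$ of the resulting contact form on $M$ agrees up to reparametrisation with $\partial_t$ on the solid torus carrying the dynamics of $f$, so $\mathbb{D}$ is a Poincar\'e section of $R$ there with return map $f$, and by Corollary~\ref{cor:mirror-solutions} $R$ is a stationary Euler flow on $M$ for some metric. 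Hence $(\mathbb{D},f)$ embeds into an Euler flow on $M$, and Euler flows on $M$ are Turing-universal.

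\textbf{Main obstacle.} The hard part is Step~1: because block diffeomorphisms are genuinely not exact symplectic, the classical suspension construction does not put the suspension flow into the Reeb line field, and the trick of letting the $dt$-coefficient $g_t$ be a function -- which absorbs precisely the primitive defect $h$ -- is what yields the return map $f$ on the nose rather than $f$ composed with a Hamiltonian correction. A secondary difficulty is Step~2, where genuine $3$-dimensional contact topology (an extension theorem for contact structures, or a compatible open book) is needed to implant the local model into an arbitrary orientable $3$-manifold without perturbing the region that performs the computation.
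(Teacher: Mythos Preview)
Your proposal is correct and follows essentially the same route as the paper's sketch: suspend the block diffeomorphism to its mapping torus, promote the suspension flow to a Reeb-like field for a contact form on the resulting solid torus, embed this solid torus as the tubular neighbourhood of a loop in $M$, extend the contact form using $3$-dimensional contact flexibility, and invoke the contact mirror (Corollary~\ref{cor:mirror-solutions}). Your Step~1 in fact supplies the explicit Thurston--Winkelnkemper-type construction (the $g_t$-trick absorbing the primitive defect $h$) that the paper's sketch leaves implicit, and your Step~2 names the contact-topological input (Martinet/Eliashberg/Lutz, or adapted open books) behind the paper's one-line ``extend the contact form to the whole manifold'' step.
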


\begin{proof}[Sketch of proof]
The construction proceeds through the following steps.
\begin{itemize}
    \item Let $f: \DD \to \DD$ be a block diffeomorphism of the disc. Consider the mapping torus $M_f = \DD \times [0,1]/(x,0) \sim (f(x), 1)$ associated to $f$ endowed with the vertical vector field $\partial_t$. The Poincaré first return map of $\partial_t$ on the slice disc $\DD \times \{0\}$ is equal to $f$.
    \item The diffeomorphism $f$ is isotopic to the identity, so $M_f$ is diffeomorphic to a standard solid torus $\DD \times \SS^1$. Hence, by pushing the vector field $\partial_t$ through this diffeomorphism, we get a vector field $X_0$ on $\DD \times \SS^1$ so that the Poincar\'e first return map of $X_0$ is $f$ on $\DD \times \{0\}$. Observe that since $f$ is the identity around the boundary of $\mathbb{D}$, $X_0 = \partial_t$ in an open set containing the boundary $\partial\DD \times \SS^1$.
    \item Since $f$ is volume-preserving on $\DD$, it is symplectic with respect to the standard symplectic form. In this way, we can construct a contact form $\alpha$ on $\DD \times \SS^1$ in such a way that its Reeb vector field $X$ coincides with $X_0$ in an open set around $\DD \times \{0\}$ and $\partial \mathbb{D} \times [0,1]$. The Poincar\'e first return map of $X$ is still $f$.
    \item Choose an embedded loop $\gamma$ in $M$, whose tubular neighbourhood is $\DD\times\SS^1$. Extend the contact form $\alpha$ from this tubular neighbourhood to the whole manifold $M$, and denote by $X$ the corresponding Reeb vector field.
    \item By the contact mirror, Theorem \ref{thm:contact-mirror}, there exists an adapted Riemannian metric on $M$ such that $X$ is a divergence-free Beltrami field. Since $X$ agrees with the original vector field on the tubular neighbourhood of $\gamma$, the image of $\mathbb{D}\times\{0\}$ on $M$ is a Poincar\'e section for $X$ and the Poincar\'e first return map is $f$. Therefore, by Proposition \ref{prop:Beltrami-Euler}, $X$ is an Euler flow, as we wanted.
\end{itemize}
\end{proof}

\begin{remark}
    Theorem \ref{thm:Euler-universal} shows universality of Euler flows on $M$ \emph{for some Riemannian metric}. This metric is conditioned by the contact structure constructed in the process, so it depends on the Turing machine to be represented.
\end{remark}

\begin{remark}
    As proved in \cite{CMPP}, the proof of Theorem \ref{thm:Euler-universal} can be actually improved so that if $M$ is initially equipped with a contact structure $\alpha$, then the contact structure can remain unchanged outside the tubular neighbourhood after the surgery. 
    Notably, the metric associated with $\alpha$ remains unchanged outside this neighbourhood throughout the construction.
\end{remark}

{
We finish this section by emphasizing that the notion of Turing-universality discussed in this paper is not the only one existing in the literature. Alternative ways of ``embedding'' a computation into a dynamical system have been reported in the literature.

In this direction, one of the most promising approaches is to characterize Turing machines through solutions of ordinary differential equations, such as in \cite{bournez2023continuous, blanc2025simulation}. In this setting, the main characters are solutions to an initial value problem of the form
\begin{equation}\label{eq:ode-turing}
    \frac{\partial y}{\partial x} = P(y), \qquad y(x_0) = y_0, 
\end{equation}
for a function $y = y(x): \mathbb{R} \to \mathbb{R}^m$, with a given polynomial $P(y)$ and initial condition $(x_0, y_0) \in \RR \times \RR^m$. In the aforementioned works, the authors showed that any computable function can be obtained as (a truncation of) a solution of the system (\ref{eq:ode-turing}) for a certain polynomial, and they actually characterize the complexity class PSPACE by using this approach.

The idea of representing computation by means of physical systems has also been very recurrent in the literature. In \cite{beggs2007can}, the authors proposed a physical device, a quarter-disc marble run, able to represent the characteristic function of any subset $A \subseteq \NN$. To do so, they used infinitely many trays and place springs at the end of the tray. If the $n$-th marble bounces at the spring and come back to the initial position, then $n \in N$, and if there is no spring in the $n$-th tray then $n \not\in \NN$. In this direction, more exotic systems have been proposed, such as the Hawking radiation of a black hole \cite{etesi2002non,andrews2019black}, as a source of information capable of representing the output of Turing machines.

There also exist approaches to this physical representation of computation by using chemical reactions, mainly by encoding Turing-complete cellular automata \cite{wolfram1983statistical,wolfram2003new}, starting from the seminal paper \cite{wolfram2019cellular}. In recent years, these ideas have gained attraction due to the so-called surface chemical reaction networks \cite{qian2010efficient,yu2024cellular}, which are bi-dimensional latices of molecules that can perform either chemical reactions associated with one of the nearest molecules or uni-molecular reactions autonomously, thus capturing the essence of the Turing-universal Rule 110 in cellular automata. Additionally, this Turing-universality has reached biological systems, by representing computation by means of DNA folding \cite{jung2025test,sidl2025computational}.
}












\section{Topological Kleene Field Theories}

\subsection{The Church-Turing thesis}

One of the most fascinating aspects of computability is its diverse representations. Over the years, a number of definitions of computability have been proposed in the literature. The most celebrated one is as the process performed by a Turing machine \cite{turing1936computable}, but many other alternative approaches have been proposed, such as $\lambda$-calculus \cite{church1932set}, combinatory logic \cite{curry1930grundlagen}, or counter machines \cite{shepherdson1963computability}, among others.

However, the efforts of several mathematicians and logicians over the last century have shown that all these models turn out to be `equivalent', in the sense that they can perform the same calculations, despite their seemingly completely different nature. This striking feature was already recognized by Turing, who in \cite{turing1939systems} stated that any effectively calculable function, in the sense that it is computable in any of the aforementioned senses, must be calculable by means of a Turing machine.
This statement, known as the `Church-Turing thesis', posits that all possible models of computability are equivalent. Notice that this assertion is not a conjecture nor a theorem since it cannot be formally proven as it depends on our definition of computability. Rather, it is a meta-conjecture—a `thesis' in the philosophical sense.

In this way, depending on the perspective we want to approach computability, the use of alternative models of computation can be convenient, since they enable new viewpoints. Among these models, the model of partial recursive functions, introduced by Kleene in \cite{kleene1936general}, is particularly effective for representing actual computations in dynamical systems.

Recall that a \emph{partial function} $f$ between sets $A$ and $B$ is a function $f: D_f \to B$ defined on a certain subset $D_f \subseteq A$ called the domain of $f$. Equivalently, any partial function can be extended to a total function by augmenting its domain. We do it as follows: choose a new element $\bullet$, which will represent the `undefined value', set $A^* = A \cup \{\bullet\}$ and $B^* = B \cup \{\bullet\}$, and define the usual function $\tilde{f}: A^* \to B^*$ by $\tilde{f}(a) = f(a)$ if $a \in D_f$ and $\tilde{f}(a) = \bullet$ if $a \not\in D_f$. It is customary to denote a partial function between $A$ and $B$ as $f: A \dashrightarrow B$. Observe that partial functions can be composed after restricting them to a common domain.

Among these partial functions, there exists a distinguished subset called the set of \emph{partial recursive functions}, also known as \emph{general recursive functions} or \emph{$\mu$-recursive functions}. It is defined as the smallest set of partial functions satisfying the following properties.

\begin{enumerate}
    \item For all $n_0 \in \NN$, the constant function $C_{n_0}: \NN^n \to \NN$, $C_{n_0}(x_1,\ldots, x_n) = n_0$, is a partial recursive function.
    \item The successor function $S: \NN \to \NN$, $S(x) = x + 1$, is a partial recursive function.
    \item The projection functions $\pi_i: \NN^n \to \NN$, $\pi_i(x_1, \ldots, x_n) = x_i$, are partial recursive functions for $i = 1, \ldots, n$.
    \item A partial function $f: \NN^n \dashrightarrow \NN^m$, $f(x_1, \ldots, x_n) = (f_1(x_1, \ldots, x_n), \ldots, f_m(x_1, \ldots, x_n))$, is a partial recursive function if and only if $f_1, \ldots, f_m: \NN^n \dashrightarrow \NN$ are partial recursive functions.
    \item\label{item:composition-PRF} If $f: \NN^n \dashrightarrow \NN^m$ and $g: \NN^m \dashrightarrow \NN^r$ are partial recursive functions, then $g \circ f: \NN^n \dashrightarrow \NN^r$ is a partial recursive function.
    \item Given partial recursive functions $f: \NN^n \to \NN$ and $g: \NN^{n+2} \to \NN$, the unique function $h: \NN^{n+1} \dashrightarrow \NN$ satisfying
    $$
        h(0, x_1, \ldots, x_n) = f(x_1, \ldots, x_n),\quad h(y + 1, x_1, \ldots, x_n) = g(y, h(y, x_1, \ldots, x_n), x_1, \ldots, x_n),
    $$
    is a partial recursive function.
    \item Let $f: \NN^{n+1} \dashrightarrow \NN$ be a partial recursive function. Given $(x_1, \ldots, x_n) \in \NN^n$, consider the set
    $$
        A_{(x_1, \ldots, x_n)} = \{y \,|\, f(y, x_1, \ldots, x_n) = 0 \textrm{ and } [0, y) \times \{(x_1,\ldots, x_n)\} \subseteq D_f\}.
    $$
    Then, the function $g: \NN^n \dashrightarrow \NN$ given by 
    $$
        g(x_1, \ldots, x_n) = \left\{ \begin{array}{ll} \min A_{x_1, \ldots, x_n} & \textrm{if } A_{x_1, \ldots, x_n} \neq \emptyset, \\
        \bullet & \textrm{otherwise},\end{array}\right.
    $$
    is a partial recursive function.
\end{enumerate}

\begin{remark}
    The previous set of axioms should be understood as follows. Properties (1)-(4) state that the simplest functions are indeed partial recursive functions, whereas axioms (5)-(6) just say structural properties of partial recursive functions, such that being partial recursive can be checked component-wise or that they are closed under composition. The important properties are (7), which state that functions defined by recursion are partial recursive, and (8), which says that finding the minimum zero of a function is partial recursive.
\end{remark}

\begin{remark}
    The functions generated by only applying axioms (1)-(7) are actually total functions, defined on the whole space, and define a proper subset known as primitive recursive functions. Only the minimization function of (8) is partial, since it is undefined in the case that $f$ has no zeroes.
\end{remark}

\begin{remark}
    Recall that there exists a natural embedding $\iota: \NN^n \to \NN$ given by $\iota(x_1, \ldots, x_n) = p_1^{x_1} p_2^{x_2} \cdots p_n^{x_n}$, where $p_i$ denotes the $i$-th prime number. This embedding can be easily proven to be a (total) recursive function. In this way, through this embedding, we can restrict our attention to partial recursive functions of the form $f: \NN \dashrightarrow \NN$.
\end{remark}

A key property of partial recursive functions is that they provide an equivalent model of computation, as posited by the Church-Turing thesis and proven by S.\ Kleene.

\begin{theorem}[\cite{kleene1936general}]\label{thm:kleene}
    Partial recursive functions are exactly the functions that can be computed with a Turing machine.
\end{theorem}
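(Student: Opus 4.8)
The plan is to prove the two inclusions separately: (i) every partial recursive function is Turing-computable, and (ii) every Turing-computable function is partial recursive.

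For direction (i), I would argue by structural induction on the inductive definition of partial recursive functions given in axioms (1)--(8). The base cases --- the constant functions $C_{n_0}$, the successor $S$, and the projections $\pi_i$ --- are all trivially implemented by small explicit Turing machines operating on a suitable encoding of tuples of naturals on the tape (say, unary blocks separated by a delimiter, or the prime-power encoding $\iota$ from the last remark). For the closure axioms, I would exhibit, for each construction, a way to build a machine for the output function from machines for the inputs: axiom (4) is bookkeeping (run component machines on copies of the input and concatenate outputs); axiom (5), composition, is the classical ``run $M_f$, then feed its output to $M_g$'' construction, taking care that $M_g$ starts in the correct head position and that undefined intermediate values propagate (if $M_f$ never halts, neither does the composite); axiom (6), primitive recursion, is implemented by an outer loop that iterates $M_g$ exactly $y$ times, using an auxiliary counter on the tape and reusing the base machine $M_f$ at step $0$; axiom (8), minimization, is the ``unbounded search'' loop --- test $f(0,\vec x), f(1,\vec x), \dots$ in turn, halting and outputting the first $y$ with $f(y,\vec x)=0$, and looping forever if no such $y$ exists or if some earlier $f(j,\vec x)$ diverges. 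One must be careful that the domain conditions in axioms (7) and (8) match exactly the halting behaviour of the constructed machines; this is the only subtle point in this direction.

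For direction (ii), given a Turing machine $M=(Q,q_0,Q_{\mathrm{halt}},\delta)$ I would arithmetize its computation à la Gödel. A configuration (state, head position, tape contents) is encoded as a single natural number $c$ via a fixed primitive recursive pairing/sequence-coding scheme. The key claim is that the ``one-step'' map $c \mapsto \mathrm{next}_M(c)$ is primitive recursive: it only requires decoding finitely many components of $c$, looking up $\delta$ on the finite set $Q\times A$ (a finite case distinction, hence primitive recursive), and re-encoding --- all of which are built from the arithmetic operations, bounded search, and definition by cases, which are standard primitive recursive functions. Iterating, the function $(n,c)\mapsto \mathrm{Step}_M(n,c)$ giving the configuration after $n$ steps is primitive recursive by axiom (7). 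The predicate ``configuration $c$ is halting'' is primitive recursive, so by the minimization axiom (8) the function $\mathrm{Halt}_M(c_0)=\mu n.\,[\,\mathrm{Step}_M(n,c_0)\text{ is halting}\,]$ is partial recursive, undefined exactly when $M$ does not halt. Finally, composing with the (primitive recursive) functions that encode the initial tape from the input and decode the output from the final configuration yields the partial function computed by $M$ as a partial recursive function.

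I expect the main obstacle to be direction (ii), specifically the verification that the one-step transition map of a Turing machine is primitive recursive. This is conceptually routine but technically laborious: it hinges on setting up a workable coding of finite sequences (e.g.\ via the $\beta$-function or prime factorization) and checking that the operations of reading a cell, writing a cell, and shifting the head translate into primitive recursive manipulations of codes --- all while handling the two-sided, compactly supported nature of the tape. Direction (i) is comparatively mechanical, the only delicate issue being the careful matching of divergence of the constructed machines with the domain conditions in the definition of $g$ under minimization and of $h$ under primitive recursion. In a survey, I would present both directions at the level of the constructions described above and relegate the coding lemmas to citations of \cite{kleene1936general,turing1936computable}.
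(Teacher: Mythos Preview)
Your sketch is the standard textbook proof of Kleene's equivalence and is correct in outline. However, the paper does not actually prove this theorem: it is stated with a citation to \cite{kleene1936general} and followed only by an informal unpacking of what the equivalence says operationally (a Turing machine halts on input $n$ iff $f(n)\neq\bullet$, etc.). There is therefore no ``paper's own proof'' to compare against --- the result is treated as classical background, exactly as you anticipate in your final sentence when you propose relegating the coding lemmas to citations. In a survey of this kind that is the appropriate choice, and your detailed plan, while sound, goes well beyond what the paper itself supplies.
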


Explicitly, Theorem \ref{thm:kleene} establishes the following equivalence. If $f: \NN \dashrightarrow \NN$ is a partial recursive function, then there exists a Turing machine $M$ such that, when executed with input the number $n \in \NN$, binary encoded, then $M$ halts if and only if $f(n) \neq \bullet$, in which case the output on the tape is precisely $f(n)$. Reciprocally, given any Turing machine $M$, the function $f: \NN \dashrightarrow \NN$ given by
$$
    f(n) = \left\{\begin{array}{ll}
        m & \textrm{if $M$ halts when $n$ is used as initial value and returns $m$}, \\
        \bullet &  \textrm{if $M$ does not halt when $n$ is used as initial value},
    \end{array}\right. 
$$
is a partial recursive function.

\begin{remark}
    The correspondence of Theorem \ref{thm:kleene} is far from being bijective: The same partial recursive function can be computed by several different non-equivalent Turing machines.
\end{remark}


\subsection{Dynamical bordisms}

The framework of partial functions opens a new perspective in the representation of discrete dynamics in continuous systems. This viewpoint is explored in the work \cite{GPMPS}, which provided a novel interpretation of computability in dynamics.

Suppose that $M$ is a smooth manifold with a vector field $X$. Consider two submanifolds $A_1, A_2 \subseteq M$ transverse to the flow of $X$. This setting naturally defines a partial function
$$
    \rho_{(M, X)}: A_1 \dashrightarrow A_2,
$$
called the \emph{reaching function}, as follows. Let $a \in A_1$ and consider its flow $\varphi_t(a)$ under $X$. If $\varphi_t(a) \in A_2$ for some $t > 0$, then we set $\rho_{(M, X)}(a) = \varphi_{t_a}(a)$, where $t_a > 0$ again denotes the minimum positive time in which the flow hits $A_2$. Otherwise, if the flow $\varphi_t(a)$ skips $A_2$ for all $t > 0$, we set $\rho_{(M, X)}(a) = \bullet$.

A particularly interesting case of reaching functions occurs in the context of \emph{dynamical bordisms}, which provide a natural way to model computational processes using smooth manifolds. These are manifolds $M$ with boundary $\partial M = A_1 \sqcup A_2$ endowed with a vector field $X$ which is transverse to the boundaries and points inward at $A_1$ and outward at $A_2$. In this manner, the flow of a point starting at the incoming boundary $A_1$ can either reach the boundary $A_2$ at a single point or get trapped inside $M$ and never get out of the bordism. For a detailed formulation of these dynamical bordisms, see \cite[Section 3]{GPMPS}.

\begin{remark}
    For applications, we will typically need to enlarge the class of dynamical bordisms to allow manifolds with corners and with a bigger boundary than just $A_1$ and $A_2$. In this case, the flow $X$ will be required to be transverse and inward pointing at $A_1$, transverse and outward pointing at $A_2$ and parallel to the remaining part of the boundary, as shown in Figure \ref{fig:dynamical-bordism}. An archetypical example of a dynamical bordism of this type is $\DD \times [0,1]$ with the vertical vector field $X = \partial_t$. In this case, $X$ is inward pointing at $\DD \times \{0\}$, outward pointing at $\DD \times \{1\}$ and parallel to the rest of the boundary $\partial\DD \times (0,1)$. 
\end{remark}

\begin{figure}[H]
    \centering
    \includegraphics[scale=0.15]{./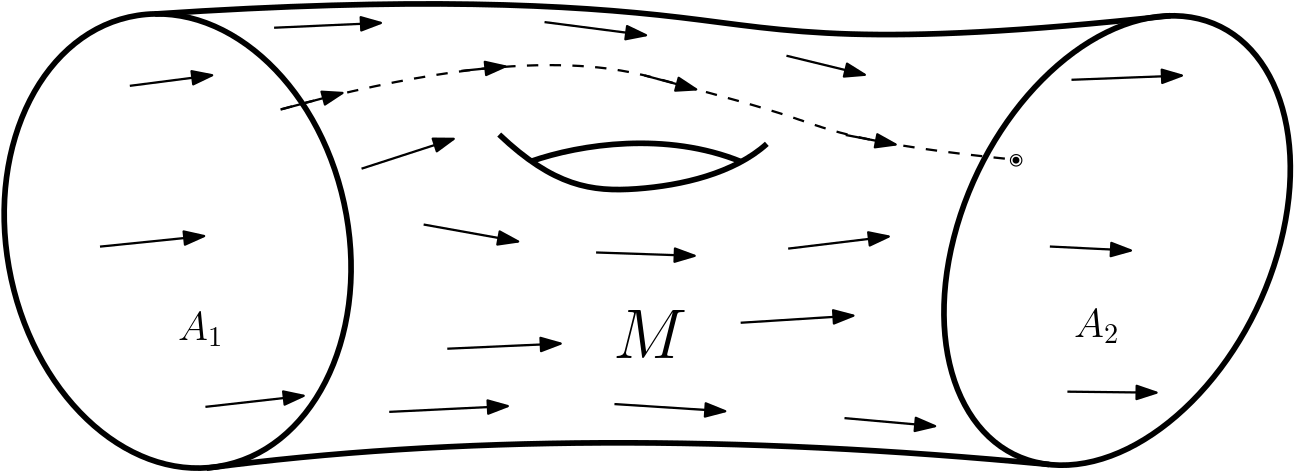}
    \caption{Dynamical bordism with the vector field on it. The dashed line represents the flow of a point.}
    \label{fig:dynamical-bordism}
\end{figure}

Now, let us focus on a dynamical bordism $M$ between standard discs $\DD_1$ and $\DD_2$. As explained in Section \ref{sec:cantor}, using the square Cantor set it is possible to define an embedding $\kappa: B^* \hookrightarrow \DD$ of the set $B^*$ of finite binary strings. But we have a natural inclusion $\NN \subseteq B^*$ by encoding every natural number as the string of its binary digits, so we get another inclusion also denoted by $\kappa: \NN \hookrightarrow \DD$. In this manner, the reaching function $\rho_{(M, X)}: \DD_1 \dashrightarrow \DD_2$ further restricts to a partial function
$$
    \rho_{(M,X)}^\kappa: \NN \dashrightarrow \NN.
$$ 

It is thus very natural to ask which partial functions $f: \NN \dashrightarrow \NN$ arise as reaching functions $\rho_{(M,X)}^\kappa$ of a dynamical bordism $(M, X)$ between discs. In this direction, the main result of \cite{GPMPS} is that partial recursive functions can be represented by dynamical bordisms.

\begin{theorem}[\cite{GPMPS}]\label{thm:hybrid}
    Given a partial recursive function $f: \NN \dashrightarrow\NN$, there exists a dynamical bordism $(M, X)$ with $M$ a bordism between two discs and $X$ volume-preserving such that the reaching function of $(M, X)$ is exactly $f$.
\end{theorem}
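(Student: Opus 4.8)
The plan is to reduce the statement to the known Turing-universality of Euler flows (Theorem \ref{thm:Euler-universal}) and the Moore correspondence (Theorems \ref{thm:TM-as-GS}, \ref{thm:GS-as-block}), but to work directly with dynamical bordisms rather than closed mapping tori, so as to keep track of an input disc and an output disc. First I would invoke Kleene's theorem (Theorem \ref{thm:kleene}): given a partial recursive $f:\NN\dashrightarrow\NN$, fix a Turing machine $M$ computing it. By Theorems \ref{thm:TM-as-GS} and \ref{thm:GS-as-block}, the one-step transition $\Delta_M$ is conjugate, via $\kappa$, to a block diffeomorphism $g:\DD\to\DD$ which is volume-preserving and the identity near $\partial\DD$. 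The iteration $g^k$ then models $k$ steps of $M$, and a point of $\kappa(\NN)\subseteq\DD$ encoding the initial configuration eventually lands (under $g^k$) in the sub-block encoding a halting configuration precisely when $M$ halts, with the output readable off that halting configuration.

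The core construction is then to build a dynamical bordism $(M,X)$ from the suspension of $g$ in such a way that the \emph{reaching function} from the incoming disc $\DD_1$ to the outgoing disc $\DD_2$ equals $f$. I would take the building block $\DD\times[0,1]$ with vertical field $\partial_t$, whose return/reaching behaviour realizes $g$ once across, and glue countably many such blocks along a ``loop with a trapdoor'': the outgoing end of each block is glued back to the incoming end via $g$, except that the sub-block $H\subseteq\DD$ corresponding to halting configurations is instead routed, by a further bordism piece, toward the genuine outgoing boundary $\DD_2$. Concretely, one decomposes $\DD=H\sqcup(\DD\setminus H)$; on $\DD\setminus H$ the flow continues around the loop (performing another step of $M$), while on $H$ the flow exits through $\DD_2$. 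To make $\rho_{(M,X)}^\kappa$ literally equal to $f$ rather than merely conjugate to it, one post-composes this exit route with an auxiliary volume-preserving bordism (again obtainable from block diffeomorphisms, since ``read off the output and rewrite the tape as the binary digits of $f(n)$'' is itself a finite symbol manipulation) so that the point of $\kappa(\NN)$ arriving at $\DD_2$ is exactly $\kappa(f(n))$. Non-halting inputs loop forever and never reach $\DD_2$, giving the value $\bullet$, as required. Volume-preservation of $X$ is inherited from the fact that every gluing map is a block (hence volume-preserving) diffeomorphism of the disc and the vertical field on $\DD\times[0,1]$ preserves the product volume; one must only check the volumes match across the glued faces, which holds because all Cantor blocks in play have equal volume.

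Two points require care and constitute the main obstacles. The first is that the naive ``infinite loop'' construction produces an infinite, non-compact bordism, whereas a single step $g$ acts on the whole disc; the real difficulty is to make the ambient manifold into a genuine finite bordism between two discs while still accommodating the \emph{unbounded} number of iterations a halting computation may require. The resolution, following \cite{GPMPS}, is that the loop is a \emph{single} copy of $\DD\times[0,1]$ whose two disc ends are identified (up to $g$), so the manifold stays compact; the unboundedly many steps are absorbed by the flow winding around this loop many times, exactly as steps of a Turing machine become many intersections with a Poincaré section in Theorem \ref{thm:Euler-universal}. One then must carefully arrange the branching at $H$: the manifold near the ``trapdoor'' has corners and a piece of boundary parallel to $X$, so it is a bordism of the enlarged type described in the remark after Theorem \ref{thm:kleene}, and one verifies transversality and the inward/outward pointing conditions at $\DD_1,\DD_2$. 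The second obstacle is smoothness and volume-preservation at the branch locus: splitting the flow according to whether a point lies in the Cantor sub-block $H$ must be realized by an honest smooth vector field on a smooth manifold with corners, not merely a set-theoretic case distinction. This is handled, as in the passage from $X_0$ to the Reeb field in the proof of Theorem \ref{thm:Euler-universal}, by thickening $H$ slightly and interpolating; since $H$ is a union of square blocks sitting in the interior of $\DD$ away from $\partial\DD$, there is room to do this while keeping $X$ volume-preserving and transverse to the relevant faces. Granting these two constructions, the reaching function is $f$ by the step-by-step correspondence above, completing the proof.
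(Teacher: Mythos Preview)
Your approach is genuinely different from the paper's and, modulo one real gap, workable. The paper does \emph{not} suspend the full one-step block diffeomorphism $g$ into a single loop. Instead it represents the Turing machine by its finite-state graph $\mathcal{G}_M$ (vertices $=$ states, edges $=$ transitions labelled $(a,a',s)$), thickens each vertex to a disc $\DD(q)$ and each edge to a tube $\DD\times[0,1]$, and places on every tube only the suspension of the elementary shift $f_s$ realizing that single transition. The incoming and outgoing boundaries are $\DD(q_0)$ and $\DD(q_f)$; the tubes attach to the sub-blocks of $\DD(q)$ and $\DD(q')$ singled out by the read/written symbols, and reversibility of $M$ guarantees these sub-blocks are pairwise disjoint, so no merging conflict ever arises. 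This modular construction has the side benefit that the homotopy type of $M$ is that of $\mathcal{G}_M$, which is exactly what feeds into the later topological-complexity bound; your single-loop construction would always give $b_1(M)=1$.

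The gap in your argument is at the \emph{merge}, not the branch. You carefully discuss routing $H$ out to $\DD_2$, but say nothing about how $\DD_1$ enters the loop. If the loop is the mapping torus of a volume-preserving $g:\DD\to\DD$, then after you divert $H$ the loop-back carries $\DD\setminus H$, and one application of $g$ sends this onto $\DD\setminus g(H)$; the input tube must land exactly in the complementary hole $g(H)$. For that to coincide with the initial-configuration block $I$ you need $g(H)=I$ as \emph{regions of the disc}, not merely on Cantor points, which forces you to prescribe the (a priori irrelevant) transition of the machine on halting states so that it maps bijectively onto $q_0$-configurations, and to choose the block extension of $g$ compatibly. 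This can be arranged, but it is not automatic and is not covered by ``thickening $H$ slightly and interpolating''; without it your loop has no room, area-wise, to accept input while remaining a smooth volume-preserving bordism. The paper's state-by-state decomposition sidesteps this entirely because every disc $\DD(q)$ has its incoming and outgoing tubes attached to disjoint symbol sub-blocks from the outset.
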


\begin{remark}
    Theorem \ref{thm:hybrid} is radically different from results of the style of Theorem \ref{thm:Euler-universal}. The Poincar\'e first return map in Theorem \ref{thm:Euler-universal} only represents one iteration of a Turing machine. To perform a calculation, we must follow the flow an indeterminate number of times, hitting the Poincar\'e section until the Turing machine halts and returns the output of the function. In sharp contrast, the computation of Theorem \ref{thm:hybrid} occurs in one go: when the flow reaches the output boundary, the computation has ended. This, in particular, means that $M$ is designed in such a way that the flow of $X$ remains trapped inside the bordism while it performs the desired calculation, and only escapes when the computation has been completed.
\end{remark}

\begin{remark}
    The topology of $M$ may be highly non-trivial. In fact, not every partial recursive function $f: \NN \dashrightarrow \NN$ can be extended to a continuous map $\tilde{f}: \DD \to \DD$, when $\NN \hookrightarrow \DD$ is endowed with the induced topology. In particular, the construction of Proposition \ref{prop:mapping-torus} does not work in general in this setting. This reflects the fact that $X$ may need loops and bifurcations to perform the calculation of $f$, an idea that will be further explored in Section \ref{sec:complexity-TKFT}.
\end{remark}

\begin{remark}
    The constructed dynamical bordism $(M, X)$ actually has an important property called `cleanness'. Roughly speaking, this means that $M$ is constructed by gluing tubes whose dynamics are generated by simple diffeomorphisms of the disc. This in particular implies that $X$ is generically non-zero and the dynamics of $X$ are very controlled locally, preventing pathological dynamics. For further details, see \cite{GPMPS}.
\end{remark}

The key insight used in the proof of Theorem \ref{thm:hybrid} is the fact that any Turing machine can be represented as a directed labelled graph, typically known as a \emph{finite-state machine} in the literature. Suppose that $M = (Q, q_0, Q_{\mathrm{halt}}, \delta)$ is a Turing machine with alphabet $A$. We create a graph $\mathcal{G}_M$ as follows:
\begin{itemize}
    \item The vertices of $\mathcal{G}_M$ are exactly the states $Q$ of $M$. The vertex $q_0$ is marked as the `starting vertex' and the vertices of $Q_{\mathrm{halt}}$ as `stopping vertices'.
    \item Let $q \in Q$ a state and $a \in A$ a symbol of the alphabet. Set $ (q', a', s) = \delta(q,a)$, where $s = \pm 1$. Then, we add a directed arrow in $\mathcal{G}_M$ between the vertices $q$ and $q'$ with label $(a, a', s)$. 
\end{itemize}
It is clear that the graph $\mathcal{G}_M$ contains the same information as the tuple $(Q, q_0, Q_{\mathrm{halt}}, \delta)$. But, more interestingly, this graph represents the state dynamics of $M$ in the sense that, during a calculation, the read/write head moves its state around the graph $\mathcal{G}_M$. In this way, if the current state is the vertex $q$ and the head reads $a$ in the tape, then the head transitions through the unique outgoing arrow of the form $(a, a', s)$ to the target state $q'$ and, during the transition, substitutes the symbol in the tape by $a'$ and shifts it by $s$.

\begin{proof}[Sketch of proof of Theorem \ref{thm:hybrid}]
    We shall outline the main steps involved in the proof. For a fully detailed argument, please check \cite[Appendix A]{GPMPS}. Let $f: \NN \dashrightarrow \NN$ be a partial recursive function. The dynamical bordism $(M, X)$ is constructed as follows.
    \begin{itemize}

        \item By Theorem \ref{thm:kleene}, there exists a Turing machine $M$ that computes $f$. Without loss of generality, we may assume that $M$ has a single halting state $q_f$ and that $M$ is reversible, possibly by combining halting states if necessary. Consider the graph $\mathcal{G}_M$ associated to $M$. 
        \item Thicken the graph $\mathcal{G}_M$ to convert it into a $3$-dimensional manifold with boundary. In this way, every vertex of $\mathcal{G}_M$, representing a state $q$, is converted into a $2$-dimensional disc $\DD(q)$, and every arrow in $\mathcal{G}_M$ between states $q$ and $q'$ is turned into a tube $\mathbb{D}\times [0,1]$ glued to $\mathbb{D}(q)$ and $\mathbb{D}(q')$, as shown in Figure \ref{fig:thickening}. 
\begin{figure}[H]
    \centering
    \includegraphics[scale=0.4]{./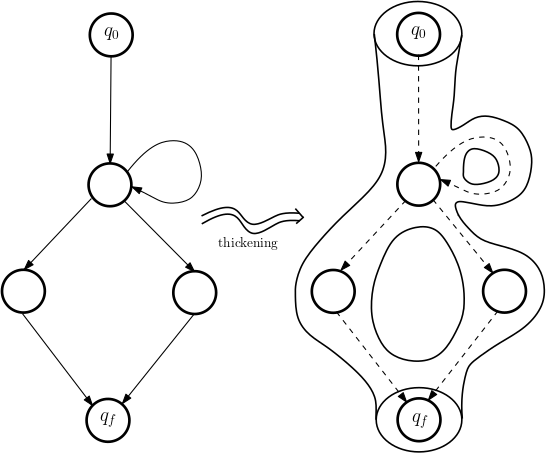}
    \caption{Thickening a finite-state machine diagram to obtain a dynamical bordism.}
    \label{fig:thickening}
\end{figure}
        \item More precisely, consider an arrow between states $q$ and $q'$ with label $(a, a', s)$, meaning that $a$ is the read symbol, $a'$ is the written symbol and $s$ is the shifting. In the tube $\DD \times [0,1]$ corresponding to the arrow, the boundary $\DD \times \{0\}$ is glued to the subset of $\DD(q)$ of those sequences starting with $a$. Analogously, the boundary $\DD \times \{1\}$ is glued to the subset of $\DD(q')$ of sequences starting with $a'$, in such a way that the gluing process substitutes the symbol $a$ by $a'$. Since $M$ is reversible, the outgoing tubes glued to the same disc are disjoint. This gives rise to a CW-complex $\widetilde{M}$ that is a bordism between the disc $\DD(q_0)$ associated to the starting state and the disc $\DD(q_f)$ corresponding to the halting state.
        \item By Theorem \ref{thm:GS-as-block}, the (generalized) shift $s$ corresponds to a diffeomorphism $f_s: \DD \to \DD$. Choose an isotopy $H_t$ between the identity morphism and $f_s$, with $H_t = \id_\DD$ for $t < \epsilon$ and $H_t = f_s$ for $t > 1-\epsilon$, and $H_t|_U = \id_U$ for all $t$ and $U$ a small open neighbourhood of $\partial \DD$. The velocity field of the flow $H_t$ defines a volume-preserving vector field on the tube $\DD \times [0,1]$ that coincides with $\partial_t$ on an open neighbourhood of the boundary $\partial(\DD \times [0,1])$ and whose reaching map between $\DD(q)$ and $\DD(q')$ is exactly $f_s$.
        \item Since the vector field on each tube is trivial around the boundaries, they can be glued to give a global vector field $\widetilde{X}$ on $\widetilde{M}$. Notice that, as $\widetilde{X}$ arises from gluing volume-preserving vector fields, it is so.
        \item Since the flow of $\widetilde{X}$ captures the read/write/shift operation of $M$, the reaching map $\rho_{(\widetilde{M},\widetilde{X})}: \DD(q_0) \dashrightarrow \DD(q_f)$ coincides exactly with the initial partial recursive function $f$ when restricted to $\NN \hookrightarrow \DD(q_0)$ and $\NN \hookrightarrow \DD(q_f)$.
        \item Smooth out the CW-complex $\widetilde{M}$ into a smooth manifold $M$ that contains $\widetilde{M}$ as a strong deformation retract. The vector field $\widetilde{X}$ can also be extended to a vector field $X$ on $M$ without altering its value on the skeleton $\widetilde{M}$. In particular, the reaching function $\rho_{(M, X)}: \DD(q_0) \dashrightarrow \DD(q_f)$ restricts to $f$, as we wanted to prove.
    \end{itemize}
\end{proof}

\begin{remark}
    The dynamical bordism constructed in Theorem \ref{thm:hybrid} is equipped with a volume-preserving vector field $X$. In an ongoing research with K.\ Cieliebak, we will show that a modification of this construction yields a vector field $X$ on any closed $3$-manifold (which is no longer a dynamical bordism) that solves the stationary Euler equations for some Riemannian metric. This will show that it is possible to embed computability in Euler flows in a sense complementary to Theorem \ref{thm:Euler-universal}. 
\end{remark}

We finish this section by pointing out that Theorem \ref{thm:hybrid} has an alternative interpretation in categorical terms. Recall that a category is a collection of objects and a collection of morphisms between them that can be composed if their domain and codomain coincide. In particular, the partial functions form a category $\textbf{PF}$ with objects the sets of the form $\NN^n$ for some $n \geq 0$, and a morphism is a partial function $f: \NN^n \dashrightarrow \NN^m$. Inside the category $\textbf{PF}$, we can find the subcategory $\textbf{PRF}$ with the same objects but whose morphisms are only partial recursive functions.

Analogously, we can consider the category $\textbf{Bord}_2^{\textup{dy}}$ whose objects are disjoint unions of discs and whose morphisms are dynamical bordisms between them. In this sense, the reaching function construction leads to a functor
\begin{equation}\label{eq:pseudo-TKFT}
    \rho: \textbf{Bord}_2^{\textup{dy}} \longrightarrow \textbf{PF}.
\end{equation}
In this context, in \cite{GPMPS}, we show that if we restrict to the subcategory $\textbf{Bord}_2^{\textup{cln}}$ of $\textbf{Bord}_2^{\textup{dy}}$ of clean bordisms, the result obtained is always a partial recursive function. Hence, the functor (\ref{eq:pseudo-TKFT}) can be restricted to a functor
\begin{equation}\label{eq:TKFT}
    \rho|_{\textbf{Bord}_2^{\textup{cln}}}: \textbf{Bord}_2^{\textup{cln}} \longrightarrow \textbf{PRF}.
\end{equation}
Furthermore, Theorem \ref{thm:hybrid} shows that this functor is full, i.e., every morphism in the target category $\textbf{PRF}$ is in the image of a morphism in $\textbf{Bord}_2^{\textup{cln}}$. In this abstract sense, the functor (\ref{eq:TKFT}) synthesizes the strong relation between dynamics and computability. For this reason, we call (\ref{eq:TKFT}) a \emph{Topological Kleene Field Theory (TKFT)}, named after S.\ Kleene who introduced partial recursive functions as a computational model. For further details, please see \cite{GPMPS}.

It is worth noticing that the name TKFT also highlights the resemblance between (\ref{eq:TKFT}) and a Topological Quantum Field Theory (TQFT), understood as a quantum field theory independent of the underlying metric. In this setting, as formalized by Atiyah in \cite{atiyah1988topological}, a TQFT forms a functor
$$
    Z: \textbf{Bord}_n \longrightarrow \textbf{Vect}
$$
from the category of $n$-dimensional bordisms to the category of vector spaces. In this language, in the same way that a TQFT captures the essence of quantization by means of Hilbert spaces, a TKFT encapsulates the idea that a dynamical bordism represents a computable function.

\subsection{Complexity through dynamical bordisms}\label{sec:complexity-TKFT}

An important consequence of Theorem \ref{thm:hybrid} is that it enables the study of computable function complexity from a purely topological perspective via the associated dynamical bordism.

\begin{definition}
    The \emph{topological complexity} of a partial recursive function $f$ is
    $$
        \textbf{TopC}(f) = \inf_{M} b_1(M),
    $$
    where the infimum runs over all clean dynamical bordisms with reaching function $f$ and $b_1(M)$ denotes the first Betti number of $M$, that is, the rank of the first homology group of $M$.
\end{definition}

\begin{remark}
    Intuitively, $b_1(M)$ counts the number of loops in $M$, and therefore $\textbf{TopC}(f)$ is the minimum number of loops of that a manifold must have to compute $f$. Notice that, since $M$ is connected and retracts to a $1$-dimensional CW-complex, it follows that the $0$-th Betti number is $b_0(M)=1$ and $b_n(M) = 0$ for $n\geq 2$. Therefore, $b_1(M)$ is the only non-trivial invariant of this type.
\end{remark}

This invariant is closely related to the so-called loop complexity $\textbf{LoopC}(f)$ of $f$. { This is defined as the minimum number of loops that a flow chart for an algorithm computing $f$ must have. Notice that the proof of Theorem \ref{thm:hybrid} actually provides a bound for the topological complexity in terms of the loop complexity.}

{
\begin{proposition}\label{prop:complexity}
    For any computable function $f$, we have
$$
    \textbf{\textup{TopC}}(f) \leq \textbf{\textup{LoopC}}(f).
$$
\end{proposition}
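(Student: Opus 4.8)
\emph{Proof proposal.} The plan is to revisit the construction underlying Theorem \ref{thm:hybrid} and exploit the fact that it is \emph{graph-preserving}: the dynamical bordism it produces is a thickening of a finite directed graph, and thickening a graph does not change its first Betti number. The only new ingredient needed here is to feed that construction an \emph{optimal} flow chart rather than an arbitrary Turing machine.

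First I would fix a flow chart $\Gamma$ of an algorithm computing $f$ whose number of loops equals $\textbf{LoopC}(f)$, so that the cycle rank of the underlying graph of $\Gamma$ is $b_1(\Gamma) = \textbf{LoopC}(f)$. After a routine normalization — merging all terminal boxes into a single halting node and, if necessary, passing to an equivalent presentation in which the transitions glue up consistently (reversibility) — one obtains a labelled finite-state-machine graph $\mathcal{G}$ of exactly the type appearing in the proof of Theorem \ref{thm:hybrid}: vertices are program states, one marked as the start and one as the halt, and each directed edge carries a read/write/shift instruction or, more generally, a generalized shift, which by Theorem \ref{thm:GS-as-block} is realized by a simple volume-preserving block diffeomorphism of the disc. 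The point is that these normalization steps can be performed by moves that are homotopy equivalences of the underlying graph — subdividing an edge, attaching a pendant vertex (``whisker''), or expanding a vertex into a small tree — none of which creates a new independent cycle, so that $b_1(\mathcal{G}) \le b_1(\Gamma) = \textbf{LoopC}(f)$.

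Next I would run the thickening construction of the proof of Theorem \ref{thm:hybrid} verbatim on $\mathcal{G}$: replace each vertex by a disc $\DD(q)$, replace each edge by a tube $\DD \times [0,1]$ carrying the flow of a suitable isotopy from the identity to the corresponding block diffeomorphism, glue the tube ends to the appropriate Cantor sub-blocks, and finally smooth the resulting CW-complex $\widetilde{M}$ into a smooth manifold $M$ with a volume-preserving vector field $X$. By that proof, the reaching function of $(M, X)$ restricted to $\NN \hookrightarrow \DD(q_0)$ and $\NN \hookrightarrow \DD(q_f)$ is exactly $f$, and the bordism obtained is clean, so $(M, X)$ is an admissible competitor in the infimum defining $\textbf{TopC}(f)$. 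Since each disc is contractible and each tube retracts onto an interval, and since $M$ strongly deformation retracts onto $\widetilde{M}$, which in turn deformation retracts onto $\mathcal{G}$, we obtain $b_1(M) = b_1(\mathcal{G})$. Combining,
$$
\textbf{TopC}(f) \;\le\; b_1(M) \;=\; b_1(\mathcal{G}) \;\le\; \textbf{LoopC}(f).
$$

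The main obstacle is precisely the bookkeeping of the second step: one must pin down a formal dictionary between the ``flow chart with loops'' model underlying the definition of $\textbf{LoopC}$ and the labelled finite-state-machine graphs used in Theorem \ref{thm:hybrid}, and verify that all the auxiliary modifications required to actually run the thickening — single halting state, disjointness of the Cantor sub-blocks to which tubes incident to a common disc are glued, possible insertion of identity tubes to accommodate corner-smoothing — can be realized without increasing the cycle rank of the graph. Equivalently, one must check that each such modification factors as a composition of edge subdivisions, tree expansions at a vertex, and additions of pendant vertices, i.e. of elementary homotopy moves that leave $b_1$ unchanged. Everything else is a direct quotation of the construction already carried out in the proof of Theorem \ref{thm:hybrid}.
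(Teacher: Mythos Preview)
Your proposal is correct and follows essentially the same approach as the paper: translate a flow chart into a finite-state-machine graph, thicken it via the construction of Theorem \ref{thm:hybrid}, and use that $M$ deformation retracts onto $\mathcal{G}$ to get $b_1(M)=b_1(\mathcal{G})$. The paper simply asserts that a flowchart ``directly translates'' into a Turing machine graph with the same number of loops and takes the infimum over flowcharts at the end, whereas you start from an optimal flowchart and are more explicit about why the normalization moves do not increase $b_1$; the underlying argument is the same.
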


\begin{proof}
    Consider a flowchart $F$ for an algorithm with $F_n$ loops. This flowchart directly translates into a Turing machine graph $\mathcal{G}$ with the same number $n$ of loops in the graph. Therefore, by the thickening construction of Theorem \ref{thm:hybrid}, the bordism $M$ associated to this Turing machine has $\mathcal{G}$ as deformation retract, so $b^1(M) = b^1(\mathcal{G}) = F_n$. This implies that
    $$
        \textbf{\textup{TopC}}(f) \leq F_n,
    $$
    for any flowchart $F$. Taking the infimum on the right hand side leads the desired inequality. 
\end{proof}
}

{Notice that there may exist alternative dynamical bordisms realizing the computable function $f$ that are not derived from the thickening of a Turing machine, which are the ones used in the proof of the bound of Proposition \ref{prop:complexity}. This implies that, if the calculation could be done more efficiently by a dynamical bordism not corresponding to a Turing machine, the inequality would be strict. This would suggest the existence of a fundamental gap, implying that dynamical bordisms could outperform Turing machines in computational efficiency.
}

However, in complexity theory, the important quantity is the number of operations performed by an algorithm to compute the output $f(n)$ as a function of the input $n$. In this dynamical context, there exists an analogous geometric feature that measures the length of the path needed to compute $f(n)$ through a dynamical bordism.

\begin{definition}
    Let $(M, X)$ be a dynamical bordism with reaching function $\rho_{(M, X)}: \NN \dashrightarrow \NN$, and consider a Riemannian metric $g$ on $M$ ensuring that $X$ is divergence-free. The \emph{length complexity} is the function $\textbf{LenC}_g(f): \NN \to (0, \infty]$ given by
    $$
        \textbf{LenC}_{(M,g)}(n) = \int_{0}^{t_n} ||X_{\varphi_t(n)}||_g \,dt,
    $$
    where $\varphi_t$ denotes the flow of $X$ initiated at the incoming boundary point $n \in \NN\hookrightarrow M$, and $t_n$ is the time at which $\varphi_{t_n}(n)$ reaches the outgoing boundary of $M$.
\end{definition}

\begin{remark}
    The above definition makes sense even if $\varphi_t(n)$ never reaches the outgoing boundary of $M$ since in that case $t_n = \infty$.
\end{remark}

\begin{remark}
    In the case that there exists a Riemannian metric $g$ such that $X$ is a Killing vector field for $g$, then $\varphi_t$ is an isometry for each $t$ and thus the norm $||X_{\varphi_t(n)}||_g$ is constant in $t$. If we further require the $||\partial_t||_g = 1$ around the incoming boundary, then we have that $\textbf{LenC}_{(M,g)}(n) = t_n$, so the length complexity is exactly the time needed to flow until the outgoing boundary.
\end{remark}

Notice that the length complexity $\textbf{LenC}_{(M,g)}$ inherently depends on the choice of metric $g$. In particular, if we rescale $g' = \lambda g$ for some $\lambda > 0$, then $\textbf{LenC}_{(M,g')} = \lambda \textbf{LenC}_{(M,g)}$. However, this rescaling does not alter the asymptotic order of magnitude of $\textbf{LenC}_{(M,g)}$, meaning that the ratio $\textbf{LenC}_{(M, g')}(n)/\textbf{LenC}_{(M,g)}(n)$ converges as $n\to\infty$, as it is typically important in complexity theory. This is particularly interesting if we want to compute a ``weighted complexity'', in which some inputs count less than others. This makes sense, for instance, if we want to compute some kind of average complexity with respect to a non-uniform probability distribution in the space of inputs.

Finally, observe that since a clean bordism $M$ is assembled by gluing standard tubes along discs, it naturally carries a distinguished metric $g_0$, which is flat within each tube. For this choice, $\textbf{LenC}_{(M,g_0)}$ computes the usual Euclidean length of the flow. {We conjecture that the length with respect to this metric is actually asymptotically comparable to the usual time complexity of the function it computes.

\begin{conjecture}
    Let $f: \NN \dashrightarrow \NN$ be a partial recursive function. Given a Turing machine $\mathcal{G}$, for any $n \in \NN$ let $T(n)$ be the number of steps that $M$ performs when started with input $n$ before halting. Then, if $M_\mathcal{G}$ is the dynamical bordism resulting from thickening the graph $\mathcal{G}$, then 
    $$
    \lim_{n \to \infty} \frac{\textbf{\textup{LenC}}_{(M_\mathcal{G},g_0)}}{T(n)} = 1.
    $$
\end{conjecture}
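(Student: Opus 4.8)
The plan is to analyze precisely how the thickening construction from the proof of Theorem \ref{thm:hybrid} turns a single computational step of the Turing machine $\mathcal{G}$ into a segment of flow inside the bordism $M_\mathcal{G}$, and to show that the flat metric $g_0$ assigns to each such segment a length that tends to a common constant independent of the step. First I would set up the bookkeeping: when $\mathcal{G}$ runs on input $n$ it visits a sequence of states $q_0, q_1, \ldots, q_{T(n)} = q_f$, and correspondingly the flow of $X$ starting at $\kappa(n) \in \DD(q_0)$ passes through the discs $\DD(q_0), \DD(q_1), \ldots, \DD(q_{T(n)})$, traversing exactly one tube $\DD \times [0,1]$ per computational step. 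Thus $\textbf{\textup{LenC}}_{(M_\mathcal{G},g_0)}(n)$ is a sum of $T(n)$ terms, each being the $g_0$-length of the flow line through one tube (plus possibly a bounded contribution from the gluing collars near each disc, which must be accounted for but does not grow with $n$).

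The heart of the argument is then to bound the length contributed by a single tube. Recall that the vector field on the tube $\DD \times [0,1]$ associated to a shift $s$ is the velocity field of an isotopy $H_t$ from $\id_\DD$ to the block diffeomorphism $f_s$, with $H_t$ equal to $\id_\DD$ for $t < \epsilon$, equal to $f_s$ for $t > 1-\epsilon$, and equal to the identity near $\partial\DD$ for all $t$; in particular, on the collar neighbourhoods of the two ends the vector field is exactly $\partial_t$, and the length of any flow line there is exactly $\epsilon$ at each end. On the middle portion $[\epsilon, 1-\epsilon]$, the flow line is the graph $\{(H_t(x_0), t)\}$ of the isotopy trajectory of the starting point $x_0 \in \DD$, whose $g_0$-length is $\int_\epsilon^{1-\epsilon}\sqrt{1 + |\partial_t H_t(x_0)|^2}\,dt$. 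The key observation is that the relevant starting points are the Cantor-set points $\kappa(t)$ representing tape contents, which lie in tiny Cantor blocks; one may choose the isotopies $H_t$ (for the finitely many distinct tube types, since $\mathcal{G}$ has finitely many transitions) so that the trajectories $t \mapsto H_t(x_0)$ of these relevant points have total horizontal displacement and speed that can be made uniformly small — indeed, after a reparametrization of each tube to unit vertical length with $\|\partial_t\|_{g_0} = 1$ near the ends, the contribution of each tube to the length converges to $1$ as the Cantor blocks involved shrink. Here "shrink" corresponds to larger inputs $n$, since a larger input is encoded as a longer binary string, hence a deeper (smaller) Cantor block, and the portion of the disc actually swept by the flow line becomes correspondingly negligible.

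Putting these estimates together, I would write $\textbf{\textup{LenC}}_{(M_\mathcal{G},g_0)}(n) = \sum_{k=1}^{T(n)}\ell_k(n)$ where each $\ell_k(n) \to 1$ uniformly as $n \to \infty$, so that $\textbf{\textup{LenC}}_{(M_\mathcal{G},g_0)}(n)/T(n) \to 1$, provided $T(n) \to \infty$ (the case of bounded $T(n)$ being either trivial or handled separately, and one should also fix the normalization of $g_0$ so that each standard tube has unit length). The main obstacle, and the point where the argument is most delicate, is controlling the middle-portion length $\int |\partial_t H_t(x_0)|\,dt$ uniformly: one must exhibit explicit block diffeomorphisms and isotopies realizing the shifts of Theorem \ref{thm:GS-as-block} whose trajectory speeds, restricted to the ever-deeper Cantor blocks hit during a computation on input $n$, vanish in the limit — equivalently, one must verify that the extra Euclidean "detour length" the flow takes while implementing a shift is $o(1)$ relative to the number of steps, and in particular that it does not accumulate additively across the $T(n)$ tubes. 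This requires care because, although each individual detour is small, there are $T(n)$ of them; the resolution is that the detour in the $k$-th tube is controlled by the depth of the Cantor block it acts on, which is bounded below in terms of $n$ (not $k$) because the tape length during the whole computation is at least the input length, so every term $\ell_k(n) - 1$ is $O(\text{depth}(n)) = o(1)$ simultaneously, and the sum of $T(n)$ such terms is still $o(T(n))$.
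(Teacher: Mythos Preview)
The statement you are trying to prove is presented in the paper as a \emph{conjecture}, not a theorem: the authors write ``We conjecture that\ldots'' and follow it with ``if this conjecture holds true\ldots''. There is no proof in the paper to compare your attempt against; the statement is explicitly left open.

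Evaluating your argument on its own merits, there is a genuine gap in the crucial step. You claim that the horizontal displacement $\int |\partial_t H_t(x_0)|\,dt$ in each tube tends to $0$ as $n\to\infty$ because ``a larger input is encoded as a longer binary string, hence a deeper (smaller) Cantor block''. But the block diffeomorphism $f_s$ realizing a shift is a \emph{fixed} piecewise-affine map on $\DD$, determined once and for all by the Turing machine (Theorem~\ref{thm:GS-as-block}); it sends macroscopic Cantor blocks to other macroscopic Cantor blocks and moves points by order-one distances in $\DD$, regardless of how precisely the point itself is specified. Concretely, a single right shift maps a point with $x$-coordinate in $[2/9,1/3]$ to one with $x$-coordinate in $[2/3,1]$, a displacement bounded below independently of $n$. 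Moreover, at the $k$-th step of the computation the point in $\DD(q_k)$ encodes the tape content \emph{near the current head position}, and those local symbols are unconstrained by the length of the original input; the head can sit over any pattern of $0$'s and $1$'s. So your assertion that ``every term $\ell_k(n)-1$ is $O(\text{depth}(n))=o(1)$ simultaneously'' is unsupported: the per-tube length contribution need not approach $1$, and your sum of $T(n)$ such terms does not give $o(T(n))$. Establishing the conjecture would require a different mechanism---for instance a careful choice of $g_0$ or of the isotopies that forces the average per-step length to converge---and this remains open.
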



Notice that, if this conjecture holds true, this result would provide us with new ways to approach the study of computational complexity in purely geometric terms.
}


\section{Open Problems and Future Directions}

The study of universality in dynamical systems and its connections with computation raises numerous challenging questions.
The synthesis of dynamical systems and computability via TKFT (Topological Kleene Field Theory) offers a new framework for understanding how continuous dynamics can emulate computation. However, many questions remain open. In this section, we highlight some refined open problems that should foster further research in the area.

\begin{enumerate}[label=\arabic*.]
    \item \textbf{Topology and Complexity in TKFT:} Explore the precise relationship between the computational complexity of partial recursive functions and the topology of their representing bordisms. {Specifically,} can topological invariants be formulated to quantify computational complexity?
    Can alternative classes of flows exhibit Turing-complete behaviors, potentially enhancing computational efficiency? 
\end{enumerate}

{From a broader perspective, these TKFTs open a new connection between continuous dynamical systems and computability. Thanks to this bridge, it is expected that the complexity properties of computable functions translate into intrinsic topological features of the representing bordisms. In this direction, we suggest that future work should further explore analogies with quantum computing, complexity theory, or fluid dynamical systems.
}

\begin{enumerate}[label=\arabic*.]
  \setcounter{enumi}{1}
    \item \textbf{Identify the dynamical essence of Turing-universality:} It is well known that chaotic systems are characterized by the presence of a horseshoe map, in the sense that 3D conservative dynamics exhibits chaos if and only if it contains a horseshoe. In this direction, is it possible to identify a similarly fundamental operation that characterizes Turing universality such that a family of dynamical systems is Turing universal if and only if it contains this operation?
\end{enumerate}

{
Addressing this problem would shed light to a more fundamental question: What makes a dynamical system Turing-universal, in the sense of Definition \ref{defn:Turing-universal}? It would be very interesting to understand whether Turing universality is a purely topological phenomenon that can be isolated in a simple operation, as is the horseshoe chaos, or, on the contrary, it requires the global interplay of different behaviors.
}

\begin{enumerate}[label=\arabic*.]
  \setcounter{enumi}{2}
    \item \textbf{ Transverse vector fields and geometrical structures:} Analyze how additional constraints, such as requiring vector fields to be Beltrami or satisfy specific energy conditions, affect universality properties. In particular, analyze the case of modular vector fields in Poisson dynamics as generalizations of Reeb vector fields.
\end{enumerate}

{
Imposing extra constraints to vector fields confers them with additional rigidity that makes the realization problem harder but, in contrast, the resulting vector fields may have a physical interpretation as motion flows of a mechanical system or solutions to physical problems. For instance, the Poisson structure is the one inherited by the algebra of smooth functions in a symplectic manifold, thus capturing the algebra of observables of a mechanical system. Notice that this kind of rigidity might be expected in many cases, as shown by Theorem \ref{thm:Euler-universal}, where the vector field can be taken to be Reeb-like for a certain contact structure.
}

\begin{enumerate}[label=\arabic*.]
  \setcounter{enumi}{3}
  
    \item \textbf{Comparison with super-Turing models and quantum supremacy:} We propose exploring the ultimate computational limits of continuous dynamical systems in terms of both capability and efficiency. {Due to the presence of new topological techniques for dynamical system, it could be possible to exploit them to speed up the computation of certain functions}. In particular, can a dynamical system be engineered that surpasses the computational power of a quantum computer, beating the quantum supremacy?

\end{enumerate}

{
In \cite{cardona2024hydrodynamic}, it was shown that area-preserving diffeomorphisms of the disc can encode the dynamics of a Turing machine with advice. This enables the embedding of super-Turing systems as Euler flows or dynamical bordisms, allowing the representation of non-computable functions and achieving the complexity class \textbf{P/poly}. This observation suggests that the TKFT constructions presented in this work may have the potential to supersede classical computation. Particularly, ``beyond-shift'' diffeomorphisms of the disc may provide a way of performing hard computations in a short time, in the same way quantum entanglement allows quantum computing to speed up certain auxiliary calculations.
}

\begin{enumerate}[label=\arabic*.]
  \setcounter{enumi}{4}
  
    \item \textbf{Physical realization of computational dynamics:} To what extent can TKFT-inspired dynamical systems be implemented in physical systems, such as fluid dynamics or electromagnetic fields? {In particular, is it possible to construct a physical device able to realize the Turing-universal vector fields with arbitrary precision?}
\end{enumerate}

{
Dynamical systems representing Turing universality rely on intricate properties, including adapted metrics for Turing-complete fluids and complex geometries in TKFTs. While these abstract systems are not in principle physically realizable, identifying a subclass of computational problems whose dynamical systems can be physically implemented remains an intriguing open question. This would provide a roadmap to a prospective construction of a physical device able to perform these calculations by exploiting the potential of physical systems to compute.

It is worth mentioning that the problem of the physical realizability of Turing-universal systems has been a central concern since the very beginning of computation, with special attention to the celebrated papers by Feynman \cite{feynman2018simulating} and Wolfram \cite{wolfram1985undecidability}. In addition, in the recent work \cite{jaeger2023toward}, the authors address the problem of physical realizability of computers by shift from classical Turing-based models to a bottom-up, physics-grounded theory that models computation as the structuring of measurable physical processes rather than symbolic manipulation.
}

\begin{enumerate}[label=\arabic*.]
  \setcounter{enumi}{5}
    \item {\textbf{Realization of computational dynamics as chaotic systems in celestial mechanics:} Is the $n$-body problem in celestial mechanics Turing-universal for some $n > 2$  (and for certain choices of masses)? Is the $n$-centre problem in celestial mechanics Turing-universal for some $n$? Is the generalized $n$-centre problem in celestial mechanics Turing-universal for some $n > 2$ and for some metric?}
\end{enumerate}

A natural class of physical systems that interfaces well with our framework consists of those governed by Reeb-type geometric structures, such as contact and cosymplectic geometries. In these contexts, the system dynamics typically emerges as the restriction of a Hamiltonian flow to a fixed energy level set within a symplectic manifold (the phase space). This perspective has been fruitfully applied to the study of the restricted three-body problem, as developed in \cite{AFKP}, where contact topology plays a crucial role in proving the existence of periodic orbits.

We now seek an analogous paradigm: one that bridges classical dynamical complexity, such as chaos, with notions of Turing universality or logical undecidability. Although the chaotic nature of various systems in celestial mechanics has been extensively studied (see, e.g., \cite{baldomaetal}), to the best of our knowledge, the computational power of the $n$-body problem has not yet been analyzed through the lens of Turing machines.

On the other hand, the study of the $n$-centre problem has revealed intriguing connections with symbolic dynamics and the Cantor set; see \cite{knauf}. A closely related system, the generalized $n$-centre problem on Riemannian surfaces, also exhibits rich chaotic behaviour, as demonstrated in \cite{new}. This strongly suggests that these problems should exhibit some type of Turing universality, analogously to that encountered in fluid dynamics.

{
\section{Conclusions}

This work has surveyed a range of recent developments illustrating the deep interplay between dynamical systems and computation, particularly through the lens of universality. Beginning with classical embeddings of dynamical systems into higher dimensional ones, we have traced how increasingly complex dynamical behavior can simulate intricate processes.

Using the notion of Poincar\'e first return map, we introduced a definition of Turing universality of a continuous dynamical system. This culminates in the remarkable universality of Euler flows, capable of reproducing the dynamics of Turing machines through the Poincaré return maps of stationary solutions. The results reviewed here show that such flows can simulate any algorithmic behavior, establishing fluid motion not merely as a physical process but as a computational medium.

Additionally, we have further explored a new paradigm: the representation of computable functions via dynamical bordisms. These constructions, which we term Topological Kleene Field Theories (TKFTs), enrich the traditional correspondence between computation and geometry by encoding computation as reaching maps of vector fields on manifolds. This viewpoint leads naturally to novel notions of complexity, both topological and metric, which open the door to a new, geometrically grounded complexity theory.

The connections laid out here suggest numerous avenues for future work, ranging from identifying topological invariants that govern computability, to exploring whether such dynamics can be physically realized or even leveraged to rival quantum computation. At the intersection of dynamics, topology, and theoretical computer science, the theory of dynamical universality offers both a unifying perspective and fertile ground for new insights.
}

\bibliographystyle{abbrv}
\bibliography{bibliography}

\end{document}